\newtheorem{Thm}[equation]{Theorem}
\newtheorem{Lem}[equation]{Lemma}
\newtheorem{Cor}[equation]{Corollary}
\newtheorem{Prop}[equation]{Proposition}
\theoremstyle{remark}
\theoremstyle{definition}
\numberwithin{equation}{section}
\newcommand{\R}{\text{\bf R}}           
\newcommand{\C}{\text{\bf C}}           
\newcommand{\N}{\text{\bf N}}           
\newcommand{\Z}{\text{\bf Z}}
\newcommand{\E}{\text{\bf E}}
\newcommand{\T}{\text{\bf T}}
\newcommand{\ad}{\text{ad}}
\newcommand{\Tr}{\text{Tr}}
\newcommand{\Ind}{\text{Ind}}
\renewcommand{\Re}{\text{Re}}
\newcommand{\fa}{{\mathfrak a}}             
\newcommand{\fb}{{\mathfrak b}}
\newcommand{\fg}{{\mathfrak g}}
\newcommand{\fk}{{\mathfrak k}}
\newcommand{\fl}{{\mathfrak l}}
\newcommand{\fm}{{\mathfrak m}}
\newcommand{\fn}{{\mathfrak n}}
\newcommand{\fs}{{\mathfrak s}}
\newcommand{\be}{\begin{equation}}
\newcommand{\beu}{\begin{equation*}}
\renewcommand{\bar}[1]{\overline{#1}}
\begin{document}
 
\baselineskip=16pt

\title{A functional equation and degenerate principal series}
\author{Juhyung Lee}
\address{Oklahoma State University \\ Mathematics Department \\ Stillwater, Oklahoma 74078}
\email{juhylee@math.okstate.edu}

\begin{abstract}
A functional equation between the zeta distributions can be obtained from the theory of prehomogeneous vector spaces. We show that the functional equation can be extended from the Schwartz space to certain degenerate principal series. 
\end{abstract}

\maketitle
\begin{center}\today \end{center}

\section{Introduction}
The fundamental theorem of prehomogeneous vector spaces gives a functional equation concerning the Fourier transformation of
a complex power of the associated relative invariants as a distribution. 
For certain reductive Lie groups that we consider, the corresponding prehomogeneous vector space gives the functional equation of the form (\ref{eq:F_E_Schwartz}) below.  
The purpose of this paper is to extend the functional equation between the tempered distributions to functions in certain degenerate principal series representations.
The choices for the groups $G$ on which we define degenerate principal series representations are 
given on Table $1$.
Such groups $G$ arise from simple non-Euclidean Jordan algebras $\fn$ in the sense that
each $\fn$ occurs as the abelian nilradical of a maximal parabolic subalgebra of a reductive Lie algebra $\fg$, where 
$\fg=\textup{Lie}(G)$ for some reductive Lie group $G$ in the table.

Each group $G$ has a parabolic subgroup $P=LN$ ($N$ is abelian) for which the adjoint action of $L$ on $\fn=\textup{Lie}(N)$ has a finite number of orbits.
In particular, there is an open orbit which is in fact dense and has complement defined by a polynomial equation $\nabla^2(Y)=0$.
Therefore, the triple $(L, \textup{Ad}, \fn)$ is a prehomogeneous vector space and its contragradient action of $L$ on the Lie algebra $\bar{\fn}=\textup{Lie}(\bar{N})$ of the opposite nilradical $\bar{N}$ has an open orbit which is again dense and has complement defined by a polynomial equation $\bar{\nabla}^2(X)=0$.
For example, for $G=GL(2n,\R)$, $\nabla=\bar{\nabla}=|\det|$.
As tempered distributions, the zeta distributions are defined by the integrals
\begin{align*}
\bar{\Z}(f,t)&=\int_{\bar{\fn}}f(X)\bar{\nabla}(X)^tdX, \textup{ for }f\in \mathcal{S}(\bar{\fn})\textup{ and}\\
\Z(h,t)&=\int_{\fn}h(Y)\nabla(Y)^tdY, \textup{ for }h\in \mathcal{S}(\fn).
\end{align*}
Here $\mathcal{S}(\bar{\fn})$ (resp. $\mathcal{S}(\fn)$) denotes the Schwartz space on $\bar{\fn}$ (resp. $\fn$).
The integers $m, n, d$ and $e$ are listed on Table $1$ for each group.
It is well-known that $\bar{\Z}(f,t)$ and $\Z(h,t)$ converge absolutely for $\textup{Re}(t)>-(e+1)$ and both expressions are complex analytic functions of $t$.
Moreover, these analytic functions in $t$ extend meromorphically to the whole complex plane and satisfy the functional equation
 \begin{equation}\label{eq:F_E_Schwartz}
\frac{\pi^{\frac{nt}{2}}}{\Gamma_n(t)}\bar{\Z}(f,t-\frac{m}{n})
=\frac{\pi^{\frac{n}{2}(-t+\frac{m}{n})}}{\Gamma_n(-t+\frac{m}{n})}\Z(\widehat{f},-t), \textup{ for }f\in \mathcal{S}(\bar{\fn}).
\end{equation}
Here $\Gamma_n(t)=\prod_{j=0}^{n-1}\Gamma\left(\frac{t-jd}{2}\right)$ ($\Gamma$ is the gamma function on $\C$) and 
$\,\,\widehat{}\,\,$ denotes the Fourier transform.
The functional equation concerning the Fourier transform of a complex power of the relative invariants as a tempered distribution was established by Mikio Sato and is called the fundamental theorem of prehomogeneous vector spaces. 
See, for example, \cite[Theorem 4.17]{Kimura03}. 
For our choices of the groups $G$, the functional equation (\ref{eq:F_E_Schwartz}) is contained in \cite{Muller86}.

We define the family of degenerate principal series representations for $s\in \C$ and each can be realized as the space of certain functions on $\bar{\fn}$. We denote it by $I(s)$. Note that the Schwartz space $\mathcal{S}(\bar{\fn})$ is contained in $I(s)$ for all $s$.
In this paper we consider the integrals $\bar{\Z}(F_s,t)$ for $F_s\in I(s)$ and prove:

\begin{Thm}\label{thm:mero_non}
Let $F_s\in I(s)$. Then the family of integrals $\bar{\Z}(F_s,t)$ is complex analytic on $-(e+1)<\textup{Re}(t)<\textup{Re}(s)-d(n-1)$
and has a meromorphic continuation to all of $\C^2$.
\end{Thm}

\begin{Thm}\label{thm:extended_F_E}
Let $F_s\in I(s)$. Then the functional equation
\begin{equation}\label{eq:FT_non}
\frac{\pi^{\frac{nt}{2}}}{\Gamma_n(t)}\bar{\Z}(F_s,t-\frac{m}{n})
=\frac{\pi^{\frac{n}{2}(-t+\frac{m}{n})}}{\Gamma_n(-t+\frac{m}{n})}\Z(\widehat{F_s},-t)
\end{equation}
holds as meromorphic functions in $(s,t)\in \C^2$.
\end{Thm}

As for the organization of our paper, we set up some notations and give some properties about the groups that we consider together with an integral formula for `polar coordinates' in section $2$ in a form convenient for our purposes. 
We define principal series representations $I(s)$ and give formulas for the action by elements of the $n$ commuting copies of $\mathfrak{sl}(2,\R)$ in section $3$. Sections $4$ contains the functional equation between the zeta distributions and the convergence range of the integral $\bar{\Z}(F_s,t)$ for $F_s\in I(s)$. Section $5$ contains the proofs of the main results.
We apply the polar coordinates to the integral $\bar{\Z}(F_s,t)$ to reduce it to the integrals over a noncompact radial set and a compact set. Then the main part of the proof is to show the meromorphic continuation of the integral over the noncompact set. 
Our main technique is to apply a string of differential operators to extend the defining range of the integral $\bar{\Z}(F_s,t)$ in two variables $s$ and $t$. Such differential operators are obtained by the Lie algebra action in section $3$.
Section $6$ contains applications of my main results to the representation theory.

Our method is inspired by the treatment of $SL(2,\R)$ in \cite{Lee2012}.
For each $0<s<1$ the complementary series representation $I(s)$ of $SL(2,\R)$ is unitary 
because there is an invariant Hermitian form $\langle \cdot,\cdot \rangle$ 
induced from the `standard' intertwining map $A_s(F_s)(y)$, 
which can be expressed as the integral $\bar{\Z}\left(\tau_yF_s,s-1\right)$ for $F_s\in I(s)$, 
here $\tau_yF_s=F_s(\cdot+y)$, 
and it is seen to be positive definite for $0<s<1$ in the following way.
As convergent integrals for $F_s\in I(s)$, $0<s<1$, we have the following string of equalities:
\begin{align}
\langle F_s,F_s \rangle&=\int_{\R}F_s(y)\overline{\frac{\pi^{\frac{s}{2}}}{\Gamma(\frac{s}{2})}\Z(\tau_yF_s,s-1)}dy\nonumber\\
&=\int_{\R}F_s(y)\overline{\frac{\pi^{\frac{1-s}{2}}}{\Gamma(\frac{1-s}{2})}\Z(\widehat{\tau_yF_s},-s)}dy\label{f_e_n=1}\\
&=\frac{\pi^{\frac{1-s}{2}}}{\Gamma(\frac{1-s}{2})}\int_{\R}\int_{\R}F_s(y)e^{-2\pi xy}\overline{\widehat{F}_s}(x)|x|^{-s}dxdy\nonumber\\
&=\frac{\pi^{\frac{1-s}{2}}}{\Gamma(\frac{1-s}{2})}\int_{\R}|\widehat{F}_s|^2(x)|x|^{-s}dx.\label{formula_inner_product_I(s)_1}\nonumber
\end{align}
The equation (\ref{f_e_n=1}) is from the functional equation (\ref{eq:F_E_Schwartz}), which is also valid for functions in $I(s)$ in the case of $SL(2,\R)$.
Therefore, a continuation of $\bar{\Z}$ for functions in $I(s)$ and 
an associated functional equation are the first step 
to construct the positive definite of the invariant Hermition form.
The proof of Proposition \ref{prop:herm_mero} is done by a similar string of equalities for $G=GL(2n,\R)$.  
In \cite{BarchiniSepanskiZierau06}, for our choices of the groups the functional equation for Schwartz functions (\ref{eq:F_E_Schwartz}) is used to construct unitary representations for $s \in [0,e+1)\cup\{\frac{m}{n}-qd:q=0,1,\cdots,n-1\}$.
See also \cite{DvorskySahi99} and \cite{DvorskySahi03} for a different approach of the realizations. 

This work was inspired in part by \cite{Lee2012} and I thank to my advisor Dr. Zierau for helping me throughout the project.

\section{Preliminaries}

The choices for the groups $G$ which we work with are given in Table $1$ in Appendix $\bf A$.
These groups $G$ arise from simple non-Euclidean Jordan algebras.
Such a $G$ is characterized by the existence of a parabolic subgroup $P=LN$ (a Levi decomposition) such that
$P$ and its opposite parabolic $\bar{P}=L\bar{N}$ are $G$-conjugate, $N$ is abelian, 
and the symmetric space corresponding to $G$ is not of tube type.
$G$ has a Cartan involution $\theta$ so that $\theta$ sends $P$ to the opposite parabolic.
We let $K$ be the fixed point group of $\theta$, a maximal compact subgroup of $G$.
We write the real Lie algebras of various Lie groups by the corresponding fraktur letters. 
The Cartan involution determines a Cartan decomposition $\fg=\fk+\fs$.

Following \cite{KosSah93} there is a maximal abelian subalgebra $\fb$ of $\fl \cap \fs$ with the following properties.

($1$) There are commuting copies of $\mathfrak{sl}(2,\R)$, denoted by $\mathfrak{sl}(2,\R)_j$, in $\fg$ spanned by $\{F_j,H_j,E_j\}$ 
 a standard basis in the sense that
\begin{align*}
&\theta(E_j)=-F_j\quad \textup{and}\quad \theta(H_j)=-H_j\\
&[E_j,F_j]=H_j, [H_j,E_j]=2E_j \quad \textup{and}\quad [H_j,F_j]=-2F_j\,,
\end{align*}
with $E_j\in \fn$, $F_j\in \bar{\fn}$ and $\fb=\sum_{j=1}^n \R H_j$.
Therefore, we can view $\prod_{j=1}^n\mathfrak{sl}(2,\R)_j$ as a subalgebra of $\fg$ 
and will denote the element $aH_j+bE_j+cF_j$ by $\begin{pmatrix}a&b\\c&-a\end{pmatrix}_j$.
Also, we can find the correspond Lie group $\prod_{j=1}^n \textup{SL}(2,\R)_j$ in $G$ and define,
for $\begin{pmatrix}a&b\\c&d\end{pmatrix}\in \textup{SL}(2,\R)$,
\begin{equation*}
\begin{pmatrix}a&b\\c&d\end{pmatrix}_j=I\times \cdots \times \begin{pmatrix}a&b\\c&d\end{pmatrix} \times \cdots \times I.
\end{equation*}

($2$) For $\epsilon_k(\sum_{j=1}^n a_j H_j)\equiv a_k$, the $\fb$-roots in $\fg$, $\fl$, and $\fn$ are 
\begin{align*}
\Sigma(\fg,\fb)&=\{\pm(\epsilon_j-\epsilon_k):1\leq j<k\leq n\}\cup\{\pm(\epsilon_j+\epsilon_k):1\leq j,k\leq n\}   \\
\Sigma(\fl,\fb)&=\{\pm(\epsilon_j-\epsilon_k):1\leq j<k\leq n\}\quad\textup{and}   \\
\Sigma(\fn,\fb)&=\{\epsilon_j+\epsilon_k:1\leq j,k\leq n\}.
\end{align*}

For each $G$, the roots in $\fn$ have just two possibilities for the multiplicity. This define integers $d$ and $e$:
\begin{align*}
&\textup{each short root has multiplicity $2d$ and}\\
&\textup{each long root has multiplicity $e+1$}.
\end{align*}
(In the case of $SO(p,q)$, Case $4$ on Tables $1$ and $2$, $d$ is a half integer. When $n=1$, $d$ is zero.)
We set $m=\textup{dim}(\fn)$. Then $m=n(d(n-1)+(e+1))$.
Define
\begin{equation*}
\Sigma^+(\fg,\fb)=\{\epsilon_j-\epsilon_k:1\leq j<k\leq n\}\cup\Sigma(\fn,\fb).
\end{equation*}

Define a character $\Lambda_0$ on $\fb$ as $\Lambda_0\equiv \sum_{j=1}^n\epsilon_j$. 
Then $\Lambda_0$ extends to a character of $\fl$
and we write $e^{\Lambda_0}$ for the corresponding character of $L$. 

There is a diffeomorphism of $\bar{\fn}\times L\times \fn$ onto a dense open set in $G$ given by $(X,\ell,Y)\mapsto\bar{\fn}_X\ell\fn_Y$,
where $\bar{n}_X=\textup{exp}(X)$ and $n_Y=\textup{exp}(Y)$. 
Therefore, any $g\in \bar{N}LN$ has a unique decomposition as $g=\bar{N}(g)\ell(g)N(g)$.
Furthermore, $L=MA$, where $A=\textup{exp}(\fa)$, $\fa=\bigcap_{j<k}ker(\epsilon_j-\epsilon_k)$.
Since the $L$ part of the decomposition has a component in $A$, we define $a(g)\in A$ by $g=\bar{N}Ma(g)N$. 
We can see this directly for $SL(2,\R)$ as follows.
\begin{equation*}
\begin{pmatrix}a&b\\c&d\end{pmatrix}=\begin{pmatrix}1&0\\\frac{c}{a}&1\end{pmatrix}
\begin{pmatrix}a&0\\0&\frac{1}{a}\end{pmatrix}\begin{pmatrix}1&\frac{b}{a}\\0&1\end{pmatrix}\quad \textup{if }a\neq 0.
\end{equation*}

We describe the orbits of $L$ in $\bar{n}$. If we set
\begin{equation*}
X_q\equiv F_1+\cdots+F_q, \quad q=1,2,\cdots,n\quad \textup{and } X_0\equiv 0 
\end{equation*}
then by \cite{Muller96} and \cite{Kaneyuki98} the $L$-orbits in $\bar{n}$ are precisely
\begin{equation*}
\mathcal{O}_q=L(X_q), \quad q=0,1,2,\cdots,n.
\end{equation*}
We write $\mathcal{O}_q=L/S_q$, $S_q$ the stabilizer of $X_q$.
As $\textup{ad}(X_n):\fl\rightarrow\fn$ is onto, $\mathcal{O}_n$ is open in $\fn$.
Moreover, it is also dense and a semisimple symmetric space of rank $n$. 

The Iwasawa decomposition of $G$ with respect to $P$ is
\begin{equation*}
G=K\textup{exp}(\fm\cap\fs)AN.
\end{equation*}
We will write it as $g=\kappa(g)\mu(g)e^{H(g)}n(g)$. This decomposition for $SL(2,\R)$ is given by the identity:
\begin{equation*}
\begin{pmatrix}a&b\\c&d\end{pmatrix}
=\begin{pmatrix}\frac{a}{\sqrt{a^2+c^2}}&\frac{-c}{\sqrt{a^2+c^2}}\\\frac{c}{\sqrt{a^2+c^2}}&\frac{a}{\sqrt{a^2+c^2}}\end{pmatrix}
\begin{pmatrix}\sqrt{a^2+c^2}&0\\0&\frac{1}{\sqrt{a^2+c^2}}\end{pmatrix}\begin{pmatrix}1&\frac{ab+cd}{a^2+c^2}\\0&1\end{pmatrix}.
\end{equation*}
In particular, for $X=\begin{pmatrix}0&0\\x&0\end{pmatrix}$, $e^{H(\bar{n}_X)}=\begin{pmatrix}\sqrt{1+x^2}&0\\0&\frac{1}{\sqrt{1+x^2}}\end{pmatrix}$.
The following lemma is easily proved by the uniqueness of the decomposition.

\begin{Lem}\label{lem_kappa}
For $k\in K\cap L$ and $X\in \bar{\fn}$, $\kappa(\bar{n}_{k\cdot X})=k\,\kappa(\bar{n}_X)\,k^{-1}$.
\end{Lem}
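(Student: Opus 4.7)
The plan is to use the fact that the exponential map intertwines the adjoint action together with uniqueness of the Iwasawa decomposition. For $k\in L$ we have
\begin{equation*}
\bar{n}_{k\cdot X}=\exp(\Ad(k)X)=k\exp(X)k^{-1}=k\,\bar{n}_X\,k^{-1}.
\end{equation*}
Inserting the Iwasawa decomposition $\bar{n}_X=\kappa(\bar{n}_X)\mu(\bar{n}_X)e^{H(\bar{n}_X)}n(\bar{n}_X)$ and distributing the conjugation by $k$ yields
\begin{equation*}
\bar{n}_{k\cdot X}=\bigl(k\kappa(\bar{n}_X)k^{-1}\bigr)\bigl(k\mu(\bar{n}_X)k^{-1}\bigr)\bigl(ke^{H(\bar{n}_X)}k^{-1}\bigr)\bigl(kn(\bar{n}_X)k^{-1}\bigr).
\end{equation*}

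The crux is to verify that each conjugated factor lies in the intended subgroup, so that the right-hand side is genuinely the Iwasawa decomposition of $\bar{n}_{k\cdot X}$. The first factor is in $K$ because $k\in K$, and the fourth is in $N$ because $L$ normalizes $N$. For the third, the key observation is that $\fa$ is central in $\fl$: every root in $\Sigma(\fl,\fb)=\{\pm(\epsilon_j-\epsilon_k)\}$ vanishes on $\sum_jH_j$, which spans $\fa$. Consequently $\Ad(k)$ preserves $\fa$ and therefore $ke^{H(\bar{n}_X)}k^{-1}\in A$. For the second factor, take $\fm$ to be the $\theta$-stable complement of $\fa$ in $\fl$; since $k\in K$ commutes with $\theta$ and $\Ad(k)$ preserves both $\fa$ and $\fs$, it also preserves $\fm$ and hence $\fm\cap\fs$, giving $k\mu(\bar{n}_X)k^{-1}\in\exp(\fm\cap\fs)$.

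Once these four checks are in place, uniqueness of the Iwasawa decomposition $G=K\exp(\fm\cap\fs)AN$ identifies the first factor with $\kappa(\bar{n}_{k\cdot X})$, which is the asserted identity. I expect no genuine obstacle; the only step requiring care is the preservation of $\fm$ and $\fa$ under $\Ad(k)$, and both follow from the centrality of $\fa$ in $\fl$ evident from the explicit root data recorded in Section $2$.
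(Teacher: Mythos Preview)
Your proposal is correct and follows exactly the approach the paper indicates: the paper states only that the lemma ``is easily proved by the uniqueness of the decomposition,'' and you have supplied precisely those details, conjugating the Iwasawa factorization of $\bar n_X$ by $k$ and verifying that each factor lands in the correct subgroup. The one point worth tightening is the claim that $\Ad(k)$ preserves $\fa$: centrality of $\fa$ in $\fl$ alone gives this for $k$ in the identity component, but for general $k\in K\cap L$ you should note that $\fa=\text{center}(\fl)\cap\fs$ (which follows from maximality of $\fb$ in $\fl\cap\fs$), and this subspace is preserved since $\Ad(k)$ respects both the center of $\fl$ and the Cartan decomposition.
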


We define $w\in \prod_{j=1}^nSL(2,\R)_j$ as $\displaystyle{w=\prod_{j=1}^n\begin{pmatrix}0&1\\-1&0\end{pmatrix}}$, which is in $K$, and satisfies $\textup{Ad}(w)\fn=\bar{\fn}$.
We also define functions on dense open subsets of $\bar{\fn}$ and $\fn$ by
\begin{align*}
\bar{\nabla}(X)&\equiv e^{\Lambda_0(\log(a(w\bar{n}_X)))},\quad X\in \bar{\fn},\quad \textup{and}\\
\nabla(Y)&\equiv \bar{\nabla}(\theta(Y)),\quad Y\in \fn.
\end{align*}
Then both $\bar{\nabla}$ and $\nabla$ are invariant under $K\cap L$.
For $G=GL(2n,\R)$, $\nabla=\bar{\nabla}=|\det|$.
The lemma below follows from the Bruhat decomposition of $SL(2,\R)$.
\begin{Lem}\label{lem_nabla}
For nonzero real numbers of $x_j$'s, $\displaystyle{\bar{\nabla}\left(\sum_{j=1}^nx_jF_j\right)=\prod_{j=1}^n|x_j|}$.
\end{Lem}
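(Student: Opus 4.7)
The plan is to reduce the computation to the $n$ commuting copies of $SL(2,\R)_j$ and apply the Bruhat decomposition of $SL(2,\R)$ already displayed in the excerpt, factor by factor. Since the $F_j$'s mutually commute (they live in distinct commuting $\mathfrak{sl}(2,\R)_j$'s), for $X=\sum_{j=1}^n x_j F_j$ I would first write
\[
\bar{n}_X=\exp(X)=\prod_{j=1}^n\exp(x_jF_j)=\prod_{j=1}^n\begin{pmatrix}1&0\\x_j&1\end{pmatrix}_j,
\]
and then, using the definition of $w$ as the product of the factor-wise rotations, obtain
\[
w\,\bar{n}_X=\prod_{j=1}^n\begin{pmatrix}0&1\\-1&0\end{pmatrix}_j\begin{pmatrix}1&0\\x_j&1\end{pmatrix}_j=\prod_{j=1}^n\begin{pmatrix}x_j&1\\-1&0\end{pmatrix}_j.
\]

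Next I would apply the Bruhat-type identity $\left(\begin{smallmatrix}a&b\\c&d\end{smallmatrix}\right)=\left(\begin{smallmatrix}1&0\\c/a&1\end{smallmatrix}\right)\left(\begin{smallmatrix}a&0\\0&1/a\end{smallmatrix}\right)\left(\begin{smallmatrix}1&b/a\\0&1\end{smallmatrix}\right)$ in each $SL(2,\R)_j$, with $a=x_j\neq 0$, to extract the diagonal part in $SL(2,\R)_j$ as $\left(\begin{smallmatrix}x_j&0\\0&1/x_j\end{smallmatrix}\right)_j$. Because the factors commute, multiplying over $j$ gives the full $\bar{N}LN$ decomposition of $w\bar{n}_X$ in $G$; the $L$-part is the product of these diagonals. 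To pass from the $L$-part to the $A$-part $a(w\bar{n}_X)$, I would split each factor into its sign and absolute value,
\[
\begin{pmatrix}x_j&0\\0&1/x_j\end{pmatrix}_j=\mathrm{sgn}(x_j)\begin{pmatrix}|x_j|&0\\0&1/|x_j|\end{pmatrix}_j,
\]
absorbing $\mathrm{sgn}(x_j)$ into $M=Z_K(A)\cap L$ and keeping the positive diagonals in $A=\exp(\fb)$.

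The last step is evaluating the character. Since $\left(\begin{smallmatrix}|x_j|&0\\0&1/|x_j|\end{smallmatrix}\right)_j=\exp(\log|x_j|\,H_j)$, we get $\log a(w\bar{n}_X)=\sum_{j=1}^n\log|x_j|\,H_j$, and then $\Lambda_0(\log a(w\bar{n}_X))=\sum_{k=1}^n\epsilon_k\bigl(\sum_{j=1}^n\log|x_j|H_j\bigr)=\sum_{j=1}^n\log|x_j|$ by $\epsilon_k(H_j)=\delta_{jk}$, which exponentiates to $\prod_{j=1}^n|x_j|$ as claimed.

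I do not expect any real obstacle here: the proof is essentially bookkeeping, and the only place where care is needed is the separation of the diagonal into $M$- and $A$-components, since $\bar{\nabla}$ is defined via the $A$-projection $a(g)$ and not the full $L$-projection $\ell(g)$. Everything else is a direct product of the $n=1$ computation already carried out in the excerpt.
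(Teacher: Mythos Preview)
Your argument is exactly what the paper has in mind: reduce to the commuting $SL(2,\R)_j$'s and read off the diagonal from the Bruhat factorization in each factor. One small correction is worth making: in this paper $A=\exp(\fa)$ with $\fa=\bigcap_{j<k}\ker(\epsilon_j-\epsilon_k)=\R\sum_j H_j$, so $A$ is one-dimensional, not $\exp(\fb)$; consequently $\exp\bigl(\sum_j\log|x_j|\,H_j\bigr)\in\exp(\fb)$ is not literally $a(w\bar{n}_X)$, and the trace-zero part of $\sum_j\log|x_j|\,H_j$ must also be absorbed into $M$ (which is ${}^0L$ here, not $Z_K(A)\cap L$). This does not affect your conclusion, because $\Lambda_0=\sum_j\epsilon_j$ vanishes on $\fb\cap\fm=\{\sum_j a_jH_j:\sum_j a_j=0\}$, so $\Lambda_0$ applied to either element gives $\sum_j\log|x_j|$ and hence $\bar{\nabla}(X)=\prod_j|x_j|$ as claimed.
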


We describe a ``polar coordinates" expression for the Lebesgue measure on $\bar{\fn}$.
In \cite{Loos77} it is shown that the elements
\begin{equation*}
\left\{\sum_{j=1}^nx_jF_j:x_1>\cdots>x_n>0\right\}
\end{equation*}
give a complete set of orbit representatives for the action of $K\cap L$ on $\mathcal{O}_n$.
We write $\Omega$ for the cone
\begin{equation*}
\Omega=\left\{(x_1,\cdots,x_n):x_1>\cdots>x_n>0\right\}.
\end{equation*}
Each $dx_j$ denotes the Lebesgue measure on $\R$.

\begin{Prop}\textup{(\cite[Proposition 7.1.3]{Schlichtkrull84}, see also \cite[Proposition 1.2]{DvorskySahi03})}\label{prop_polar_coord}
Let $dX$ be the Lebesgue measures on $\bar{\fn}$. Then we have
\begin{equation*}
\int_{\bar{\fn}}f(X)dX\\=c\int_{K\cap L}\left[\int_{\Omega}f(k\cdot \sum_{j=1}^nx_jF_j)\prod_{j=1}^nx_j^e
\prod_{1\leq i<j\leq n}(x_i^2-x_j^2)^d\prod_{j=1}^ndx_j\right]dk.
\end{equation*}
\end{Prop}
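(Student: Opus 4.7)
The strategy is to realize the formula as a change-of-variables identity for the parametrization
\[
\Phi : (K\cap L) \times \Omega \longrightarrow \bar{\fn},\qquad (k,x)\longmapsto \Ad(k)\,X_0(x),\quad X_0(x)=\sum_{j=1}^n x_j F_j.
\]
Since $\{X_0(x):x\in\Omega\}$ is a complete set of $(K\cap L)$-orbit representatives in the open dense orbit $\mathcal{O}_n$, and $\bar{\fn}\setminus\mathcal{O}_n$ is Lebesgue null, $\Phi$ descends modulo the $(K\cap L)$-stabilizer $Z_0$ of a generic $X_0(x)$ to a diffeomorphism onto a full-measure subset of $\bar{\fn}$. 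The constant $c$ then absorbs $\textup{vol}(Z_0)$ together with any normalization arising from the chosen root-space bases, and the task reduces to computing $|\det d\Phi_{(e,x)}|$.

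The differential is $d\Phi(Z,v)=[Z,X_0]+\sum_j v_j F_j$ for $(Z,v)\in(\fk\cap\fl)\oplus\R^n$, and I would read off the determinant by decomposing the target along $\fb$-weights,
\[
\bar{\fn} \;=\; \bigoplus_{k=1}^n \bar{\fn}_{-2\epsilon_k} \;\oplus\; \bigoplus_{i<j}\bar{\fn}_{-(\epsilon_i+\epsilon_j)}.
\]
For a long-root block $\bar{\fn}_{-2\epsilon_k}$, a weight count shows that among the $\fb$-weights appearing in $\fk\cap\fl$ only the zero weight $\fk\cap\fm$ can map into $\bar{\fn}_{-2\epsilon_k}$ under $\ad X_0$, producing $x_k\,\ad(Z)F_k$; together with the radial vector $v_kF_k$ this yields a block of determinant a nonzero constant times $x_k^e$, since $\ad(\fk\cap\fm)F_k$ exhausts $\bar{\fn}_{-2\epsilon_k}/\R F_k$ (of dimension $e$) by the Jordan-algebra structure used in \cite{KosSah93}.

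The short-root block $\bar{\fn}_{-(\epsilon_i+\epsilon_j)}$, $i<j$, is the delicate piece. Using the two commuting copies $\mathfrak{sl}(2,\R)_i\times\mathfrak{sl}(2,\R)_j$, the spaces $\fl_{\pm(\epsilon_i-\epsilon_j)}$, $\fn_{\epsilon_i+\epsilon_j}$, and $\bar{\fn}_{-(\epsilon_i+\epsilon_j)}$ assemble into $2d$ copies of the $4$-dimensional outer-tensor module of the two standards for $\mathfrak{sl}(2,\R)_i\times\mathfrak{sl}(2,\R)_j$. For $v\in\fl_{\epsilon_i-\epsilon_j}$, only $x_i[v,F_i]$ and $x_j[\theta(v),F_j]$ survive in $[v+\theta(v),X_0]$, both landing on the common one-dimensional $(-1,-1)$-weight line of the chosen summand; using $\theta(F_i)=-E_i$ and the $\mathfrak{sl}(2)$-triple relations this reduces to a scalar $(x_i\pm x_j)$ times a generator, the sign determined by the $\theta$-action on the $(1,-1)$-weight line.

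The main obstacle is to verify that these $2d$ summands split into exactly $d$ of each sign, producing the factor $(x_i-x_j)^d(x_i+x_j)^d=(x_i^2-x_j^2)^d$. I would handle this via the symmetry coming from the element $w\in K$ with $\Ad(w)\fn=\bar{\fn}$, which interchanges $\fn$ with $\bar{\fn}$ and swaps the two sign types within each outer-tensor isotypic component; failing that, the tables in Appendix~A permit a case-by-case check, or one may invoke Harish-Chandra's polar integration formula applied to the semisimple symmetric space structure on $\mathcal{O}_n$ noted above, whose restricted root data forces the same answer. Multiplying the block determinants over all $k$ and $i<j$, and using left-invariance of Haar measure on $K\cap L$ to propagate the computation from $(e,x)$ to arbitrary $(k,x)$, the standard change-of-variables formula then yields exactly the stated identity.
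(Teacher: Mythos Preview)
The paper does not supply a proof of this proposition; it is cited from \cite[Proposition 7.1.3]{Schlichtkrull84} (see also \cite[Proposition 1.2]{DvorskySahi03}), so there is no in-paper argument to compare against. Your Jacobian computation via the parametrization $\Phi$ and the $\fb$-weight decomposition of $\bar{\fn}$ is the standard route to such polar-coordinate formulas and is essentially what underlies those references.

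The point you flag as the main obstacle---that the $2d$ short-root directions for each pair $i<j$ split evenly into $d$ contributing a factor $(x_i-x_j)$ and $d$ contributing $(x_i+x_j)$---is indeed the only nontrivial step, and of your three proposed fixes the cleanest is the third: recognize $\mathcal{O}_n$ as a rank-$n$ semisimple symmetric space (as the paper remarks just before stating the proposition) and invoke the general polar integration formula for such spaces, whose Jacobian is a product over restricted roots with multiplicities read off from the classification. This bypasses the delicate $\theta$-eigenspace bookkeeping in your second paragraph. Your first proposed fix via the element $w$ is less convincing as stated, since $w$ swaps $\fn$ and $\bar{\fn}$ rather than acting within the short-root block of $\bar{\fn}$, so it does not obviously pair the two sign types; making that argument work would require an additional Weyl-group element interchanging $\epsilon_i$ and $-\epsilon_i$ inside $K\cap L$, whose existence depends on the case.
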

The scalar $c$ in the above formula is independent of $f$ and depend on the normalization of the measure $dX$.

\section{Generalized principal series representations and the Lie algebra action}
For $s\in \C$, we may define a family of normalized principal series representations as induced pictures
\begin{align*}
\textup{Ind}_P^G(s)=\{f&:G\rightarrow \C:f \textup{ is smooth and}\\
&f(gman)=e^{-(s+\frac{m}{n})\Lambda_0(\log(a))}f(g), man\in P=MAN\}.
\end{align*}
The group $G$ acts by left translation:
\begin{equation*}
(g\cdot f)(g_1)=f(g^{-1}g_1).
\end{equation*}
Then the compact picture is obtained from the induced picture by restricting to $K$.
We set $\varphi=f|_K$, $f\in \textup{Ind}_P^G(s)$.
The compact picture may be written as
\begin{equation*}
C^{\infty}(K/M)=\{\varphi:K\rightarrow \C:\varphi \textup{ is smooth and }\varphi(km)=\varphi(k), k\in K\textup{ and } m\in M\}.
\end{equation*}
Note that the action of $G$ corresponding to the left translation does depend on $s$, 
but the function $\varphi$ in the compact picture is independent on $s$.

The noncompact picture is given by restricting the induced picture to $\bar{N}$.
For $f\in \textup{Ind}_P^G(s)$, we set $F_s(X)=f(\bar{n}_X)$. 
Then $\textup{Ind}_P^G(s)$ may be identified with
\begin{equation*}
I(s)=\{F_s\in C^{\infty}(\bar{\fn}):F_s(X)=f(\bar{n}_X),\,\,\textup{for some }f\in \textup{Ind}_P^G(s)\}.
\end{equation*}
By the Bruhat decomposition, $g\in\bar{N}P$ acts by
\begin{equation*}
g\cdot F_s(X)=e^{-(s+\frac{m}{n})\Lambda_0}(a(g^{-1}\bar{n}_X))F_s(\log(\bar{N}(g^{-1}\bar{n}_X))).
\end{equation*}
In particular
\begin{align*}
&(\ell\cdot F_s)(X_1)=e^{-(s+\frac{m}{n})\Lambda_0}(a(\ell^{-1}))F_s(\ell^{-1}\cdot X_1)\quad\textup{and}\\
&(\bar{n}_X\cdot F_s)(X_1)=F_s(X_1-X).
\end{align*}

We may express functions in the noncompact picture in terms of functions in the compact picture.
By the Iwasawa decomposition, $F_s(X)=e^{-(s+\frac{m}{n})\Lambda_0(H(\bar{n}_X))}\varphi(\kappa(\bar{n}_X))$.
If $\varphi\equiv 1$, then $F_s$ is called the spherical function, denoted by $h_s$.
That is,
\begin{equation*}
h_s(X)=e^{-(s+\frac{m}{n})\Lambda_0(H(\bar{n}_X))}.
\end{equation*}
Note that $h_s$ is invariant under $K$.
The lemma below follows from the Iwasawa decomposition for $SL(2,\R)$ and Lemma \ref{lem_kappa}.
\begin{Lem}\label{lem_F_s_phi}
Let $X=k\cdot \sum_{j=1}^nx_jF_j$, for $k\in K\cap L$.
Then the spherical function is
\begin{equation*}
h_s(X)=\frac{1}{\prod_{j=1}^n(1+x_j^2)^{\frac{s+\frac{m}{n}}{2}}}.
\end{equation*}
Therefore, $F_s\in I(s)$ can be expressed as
\begin{equation*}
F_s(X)=\frac{(k^{-1}\cdot\varphi)\left(\kappa\left(\sum_{j=1}^nx_jF_j\right)\right)}{\prod_{j=1}^n(1+x_j^2)^{\frac{s+\frac{m}{n}}{2}}}.
\end{equation*}
\end{Lem}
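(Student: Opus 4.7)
The plan is to first compute the spherical function $h_s$ at the ``diagonal'' point $Y=\sum_{j=1}^n x_j F_j$ by using the product structure of the commuting copies $SL(2,\R)_j$, then upgrade to $X=k\cdot Y$ by showing that $h_s$ is $(K\cap L)$-invariant under the linear action on $\bar{\fn}$, and finally combine the Iwasawa-picture identity $F_s(X)=h_s(X)\,\varphi(\kappa(\bar{n}_X))$ (immediate from $f(kman)=e^{-(s+m/n)\Lambda_0(\log a)}\varphi(k)$) with Lemma \ref{lem_kappa} to handle the general $F_s$.

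For $h_s$ at $Y=\sum_j x_j F_j$: the $F_j$'s commute, so $\bar{n}_Y=\prod_j\exp(x_j F_j)=\prod_j\begin{pmatrix}1&0\\x_j&1\end{pmatrix}_j$, and the explicit $SL(2,\R)$ Iwasawa identity recorded in the excerpt applies factorwise and yields $e^{H(\bar{n}_Y)}=\prod_j\begin{pmatrix}\sqrt{1+x_j^2}&0\\0&1/\sqrt{1+x_j^2}\end{pmatrix}_j$, with trivial $\mu$-part since each $\mathfrak{m}_j\cap\mathfrak{s}_j=0$. Thus $H(\bar{n}_Y)=\sum_j\tfrac{1}{2}\log(1+x_j^2)\,H_j$, and since $\Lambda_0=\sum_k\epsilon_k$ with $\epsilon_k(H_j)=\delta_{jk}$, we obtain $\Lambda_0(H(\bar{n}_Y))=\tfrac{1}{2}\log\prod_j(1+x_j^2)$; exponentiating gives the claimed $h_s(Y)$.

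To pass from $Y$ to $X=k\cdot Y$ with $k\in K\cap L$, note that $\bar{n}_{kY}=k\bar{n}_Yk^{-1}$. Conjugating the Iwasawa factorization $\bar{n}_Y=\kappa(\bar{n}_Y)\mu(\bar{n}_Y)e^{H(\bar{n}_Y)}n(\bar{n}_Y)$ by $k$, using $L=Z_G(A)$ so $k$ commutes with the $A$-factor, and appealing to the uniqueness of the Iwasawa decomposition, yields $H(\bar{n}_{kY})=H(\bar{n}_Y)$ (and incidentally reproves Lemma \ref{lem_kappa}). Hence $h_s(k\cdot Y)=h_s(Y)$, finishing the spherical formula. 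For the general $F_s$: evaluate $F_s(X)=h_s(X)\,\varphi(\kappa(\bar{n}_X))$ at $X=kY$; Lemma \ref{lem_kappa} converts $\varphi(\kappa(\bar{n}_{kY}))$ into $\varphi(k\,\kappa(\bar{n}_Y)\,k^{-1})$. Since $K\cap L\subset M$ (writing $k=ma$ in $L=MA$, the condition $\theta(k)=k$ forces $m^{-1}\theta(m)=a^2\in M\cap A=\{e\}$, so $a=e$), the right-$M$-invariance of $\varphi$ removes the trailing $k^{-1}$, and $\varphi(k\,\kappa(\bar{n}_Y))=(k^{-1}\cdot\varphi)(\kappa(\bar{n}_Y))$ by the definition of the compact-picture $K$-action.

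The main technical subtlety is the interplay of Lemma \ref{lem_kappa} with the right-$M$-invariance of $\varphi$: the inclusion $K\cap L\subset M$ is what allows the trailing $k^{-1}$ produced by conjugation to be absorbed. Once this is in place, everything reduces to the factorwise application of the $SL(2,\R)$ Iwasawa identity already provided in the excerpt, so there is no further analytic content to handle.
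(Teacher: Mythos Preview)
Your approach matches the paper's (which simply cites the $SL(2,\R)$ Iwasawa decomposition and Lemma~\ref{lem_kappa}), and the overall argument is correct, but one step is misstated. The claim that the $\mu$-part is trivial ``since each $\fm_j\cap\fs_j=0$'' confuses the Iwasawa decomposition inside $SL(2,\R)_j$ with that of $G$: in $G$ the subspace $\fa=\bigcap_{j<k}\ker(\epsilon_j-\epsilon_k)$ is one-dimensional (spanned by $\sum_j H_j$), so $H(\bar n_Y)\in\fa$ cannot equal $\sum_j\tfrac12\log(1+x_j^2)H_j$ unless all $x_j$ coincide; the remaining $\fb\cap\fm$-component contributes to $\mu(\bar n_Y)$, which is generally nontrivial. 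What saves the computation is that $\Lambda_0$ vanishes on $\fb\cap\fm$ (equivalently, the character $e^{\Lambda_0}$ is trivial on $M$, which is built into the induced-picture transformation law $f(gman)=e^{-(s+m/n)\Lambda_0(\log a)}f(g)$), so $\Lambda_0(H(\bar n_Y))=\Lambda_0\bigl(\sum_j\tfrac12\log(1+x_j^2)H_j\bigr)=\tfrac12\sum_j\log(1+x_j^2)$ regardless, and your formula for $h_s$ follows. The remainder of your argument---$(K\cap L)$-invariance of $h_s$, the use of Lemma~\ref{lem_kappa}, and the inclusion $K\cap L\subset M$ to absorb the trailing $k^{-1}$---is correct and is exactly what the paper intends.
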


We describe the action by elements of the Lie subalgebras $\mathfrak{sl}(2,\R)_j$ of $\fg$.
The Lie algebra action will play an important role in proving the main results.
For the group representation on $I(s)$, the correspond Lie algebra representation is given by the formula, for $Y\in \fg$,
\begin{align*}
Y\cdot F_s(X)&=\frac{d}{dt}\Big|_{t=0}\textup{exp}(tY)\cdot f(\bar{n}_X)\\
&=\frac{d}{dt}\Big|_{t=0}e^{-(s+\frac{m}{n})\Lambda_0}(\ell(\textup{exp}(-tY)\bar{n}_X))F_s(\log(\bar{N}(\textup{exp}(-tY)\bar{n}_X)))
\end{align*} 

\begin{Lem}\label{lem_Lie_algebra_action}
Let $X=\sum_{j=1}^nx_jF_j$. The action of $\mathfrak{sl}(2,\R)_j$ on $I(s)$ is given by
\begin{enumerate}
\item $E_k\cdot F_s(X)=(s+\frac{m}{n})x_kF_s(X)+x_k^2\frac{\partial F_s}{\partial x_k}(X)$,
\item $H_k\cdot F_s(X)=(s+\frac{m}{n})F_s(X)+2x_k\frac{\partial F_s}{\partial x_k}(X)$, and
\item $F_k\cdot F_s(X)=-\frac{\partial F_s}{\partial x_k}(X)$.
\end{enumerate}
\end{Lem}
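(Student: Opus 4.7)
The plan is to evaluate the general Lie-algebra-action formula stated just above the lemma at $Y = E_k, H_k, F_k$, and to reduce each case to a routine $2 \times 2$ Bruhat-type computation in the $k$-th copy $SL(2,\R)_k$. The key observation is that $\bar{n}_X = \prod_j \bar{n}_{x_j F_j}$ factors through the commuting subgroups $SL(2,\R)_j$, so for $Y \in \mathfrak{sl}(2,\R)_k$ the product $\exp(-tY)\bar{n}_X$ differs from $\bar{n}_X$ only in its $k$-th factor. Its $\bar{N}LN$ decomposition in $G$ is therefore determined by the decomposition of that single factor in $SL(2,\R)_k$, which is handled by the explicit Bruhat identity already recorded in Section $2$.

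The case $Y = F_k$ is immediate: $\exp(-tF_k)\bar{n}_X = \bar{n}_{X - tF_k}$ lies in $\bar{N}$, so both the $\ell$- and $N$-components are the identity and differentiating $F_s(X - tF_k)$ at $t = 0$ yields (3). For $Y = H_k$, the element $\exp(-tH_k)$ lies in $A$, and conjugating it past $\bar{n}_X$ uses $[H_k, F_j] = -2\delta_{kj}F_j$: this replaces $x_k$ by $e^{2t}x_k$, leaves the other $x_j$ alone, and produces an $A$-factor $\exp(-tH_k)$ whose character evaluation is $e^{(s+m/n)t}$ since $\Lambda_0(H_k) = 1$. Differentiating at $t = 0$ gives (2). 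For $Y = E_k$, I would compute the product $\begin{pmatrix} 1 & -t \\ 0 & 1 \end{pmatrix}\begin{pmatrix} 1 & 0 \\ x_k & 1 \end{pmatrix} = \begin{pmatrix} 1-tx_k & -t \\ x_k & 1 \end{pmatrix}$ in $SL(2,\R)_k$ and apply the displayed Bruhat formula to read off the $\bar{N}$-part as the shift $x_k \mapsto x_k/(1-tx_k)$ in the $k$-th slot and the $A$-part as $\begin{pmatrix} 1-tx_k & 0 \\ 0 & (1-tx_k)^{-1} \end{pmatrix}_k$. The action formula then becomes $(1-tx_k)^{-(s+m/n)}\, F_s\!\left(X + \tfrac{tx_k^2}{1-tx_k}\,F_k\right)$, and differentiation at $t = 0$ produces (1).

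The whole lemma is essentially bookkeeping, so there is no real obstacle. The only small normalization point that must be verified is $\Lambda_0(H_k) = 1$; this follows from the $\mathfrak{sl}(2,\R)_k$ relation $[H_k, E_k] = 2E_k$ together with the identification of $E_k$ as a root vector for the long root $2\epsilon_k$, which forces $\epsilon_j(H_k) = \delta_{jk}$ and hence $\Lambda_0(H_k) = \sum_j \epsilon_j(H_k) = 1$.
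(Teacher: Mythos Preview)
Your proposal is correct and follows essentially the same route as the paper's proof: reduce to the $k$-th $SL(2,\R)$ factor, compute the Bruhat decomposition of $\exp(-tY)\bar n_X$ there (the paper records exactly the three displayed identities you describe), and differentiate at $t=0$. Your additional remarks justifying $\Lambda_0(H_k)=1$ and writing out the action as $(1-tx_k)^{-(s+m/n)}F_s\!\left(X+\tfrac{tx_k^2}{1-tx_k}F_k\right)$ make explicit steps the paper leaves to the reader, but the argument is the same.
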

\begin{proof}
For sufficiently small values of $t$, we may assume that $1-tx_k>0$. 
In $\prod_{j=1}^n\textup{SL}(2,\R)_j$, by the Bruhat decomposition, we have
\begin{align*}
\textup{exp}(-tE_k)\textup{exp}(X)&=\textup{exp}\left(\sum_{j\neq k}x_jF_j+\frac{x_k}{1-tx_k}F_k\right)
\begin{pmatrix}1-tx_k&0\\0&\frac{1}{1-tx_k}\end{pmatrix}_k\begin{pmatrix}1&\frac{-t}{1-tx_k}\\0&1\end{pmatrix}_k\,,\\
\textup{exp}(-tH_k)\textup{exp}(X)&=\textup{exp}\left(\sum_{j\neq k}x_jF_j+x_ke^{2t}F_k\right)
\textup{exp}\left(-tH_k\right)\,,\textup{and}\\
\textup{exp}(-tF_k)\textup{exp}(X)&=\textup{exp}\left(\sum_{j\neq k}x_jF_j+(x_k-t)F_k\right)\,.
\end{align*}
This proves the lemma.
\end{proof}

\section{Zeta distributions and integrals for functions in $I(s)$}

The functions $\bar{\nabla}(X)^t$ and $\nabla(Y)^t$ are locally $L^1$ functions 
for $\textup{Re}(t)>-(e+1)$ and they define tempered distributions by the integrals
\begin{align*}
\bar{\Z}(f,t)&=\int_{\bar{\fn}}f(X)\bar{\nabla}(X)^tdX, \textup{ for }f\in \mathcal{S}(\bar{\fn})\textup{ and}\\
\Z(h,t)&=\int_{\fn}h(Y)\nabla(Y)^tdY, \textup{ for }h\in \mathcal{S}(\fn).
\end{align*}
Here $\mathcal{S}(\bar{\fn})$ (resp.~ $\mathcal{S}(\fn)$) denotes the space of Schwartz functions on $\bar{\fn}$ (resp.~ $\fn$).
Note that in the range $\textup{Re}(t)>-(e+1)$ both expressions are complex analytic functions of $t$.
It is also well-known that there is a meromorphic continuation to all of $\C$.
This can be viewed by a general result of \cite{Bernstein72} as follows.
Both $\bar{\nabla}^2$ and $\nabla^2$ are polynomials 
by \cite[Lemma 2.10]{BarchiniSepanskiZierau06}, for example. 
There is a polynomial $b(t)$ so that
\begin{equation*}
\bar{\nabla}(\partial_X)^2\bar{\nabla}(X)^t=b(t)\bar{\nabla}(X)^{t-2}\textup{ and }
\nabla(\partial_Y)^2\nabla(Y)^t=b(t)\nabla(Y)^{t-2}.
\end{equation*}
In particular, for $b_k(t)=b(t)b(t-2)b(t-4)\cdots b(t-2(k-1))$,
\begin{equation*}
\bar{\nabla}(\partial_X)^{2k}\bar{\nabla}(X)^t=b_k(t)\bar{\nabla}(X)^{t-2k}\textup{ and }
\nabla(\partial_Y)^{2k}\nabla(Y)^t=b_k(t)\nabla(Y)^{t-2k}.
\end{equation*}
We integrate by parts to get
\begin{equation*}\label{eq:b_k}
\bar{\Z}(\bar{\nabla}(\partial_X)^{2k}f, t)=b_k(t)\bar{\Z}(f,t-2k)\textup{ and }
\Z(\nabla(\partial_Y)^{2k}h, t)=b_k(t)\Z(h,t-2k)
\end{equation*}
for sufficiently large $\Re(t)$.
Since the left hand side is analytic for $\Re(t)>-(e+1)$, both $\bar{\Z}(f,t)$ and $\Z(h,t)$
continue to meromorphic functions on $\Re(t)>-(e+1)-2k$ for any $k$.

We now turn to the Fourier transformation and functional equation.
Let $B$ denote the Killing form of $\fg$.
Then $B$ gives a nondegenerate pairing between $\bar{\fn}$ and $\fn$.
Since the form $-B(\cdot\,,\,\theta(\cdot))$ is positive definite on $\fg$, we may define inner products
\begin{align*}
\langle X_1,X_2\rangle &= -\frac{n}{4m}B(X_1,\theta(X_2))\textup{ on }\bar{\fn}\textup{ and}\\
\langle Y_1,Y_2\rangle &= -\frac{n}{4m}B(Y_1,\theta(Y_2))\textup{ on }\fn.
\end{align*}
For example, if $G=GL(2n,\R)$, then $\langle X_1,X_2\rangle$ is the trace form on $\bar{\fn}$.
Define the Fourier transforms by 
\begin{align*}
\widehat{f}(Y)&=\int_{\bar{\fn}}f(X)e^{-2\pi i\langle X,Y\rangle }dX,\textup{ for } f\in L^1(\bar{\fn})\textup{ and}\\
\widehat{h}(X)&=\int_{\fn}h(Y)e^{-2\pi i\langle X,Y\rangle }dY,\textup{ for } h\in L^1(\fn).
\end{align*}
We may regard the Fourier transform $\widehat{f}$ on $\bar{\fn}$ (resp. $\widehat{h}$ on $\fn$) 
as a function on $\fn$ (resp. $\bar{\fn}$).
There is a functional equation relating the two distributions via the Fourier transform.
We let $\Gamma$ denote the gamma function on $\C$.

\begin{Thm}\textup{(\cite{Muller})}
Let $t\in \C$ and $f\in \mathcal{S}(\bar{\fn})$. As meromorphic functions
\begin{equation}\label{eq:FT_S}
\frac{\pi^{\frac{nt}{2}}}{\Gamma_n(t)}\bar{\Z}(f,t-\frac{m}{n})=\frac{\pi^{\frac{n}{2}(-t+\frac{m}{n})}}{\Gamma_n(-t+\frac{m}{n})}\Z(\widehat{f},-t),
\end{equation}
where
\begin{equation*}
\Gamma_n(t)=\prod_{j=0}^{n-1}\Gamma\left(\frac{t-jd}{2}\right).
\end{equation*}
\end{Thm}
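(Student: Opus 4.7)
The plan is to follow the standard Sato strategy: recognize both sides of (\ref{eq:FT_S}) as $L$-relatively invariant tempered distributions on $\bar{\fn}$ that transform by the same character of $L$, conclude by uniqueness on the open orbit that they are proportional, and pin down the proportionality constant by evaluating on a Gaussian.

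First I would record the transformation laws. The relative invariant $\bar{\nabla}$ satisfies $\bar{\nabla}(\ell\cdot X) = \chi(\ell)\bar{\nabla}(X)$ for some character $\chi$ of $L$, and the Fourier transform intertwines the $L$-action on $\bar{\fn}$ with the contragredient action on $\fn$ relative to the pairing $\langle\cdot,\cdot\rangle$; consequently $\nabla$ on $\fn$ is relatively invariant with the reciprocal character. A direct change of variables then shows that both $f \mapsto \bar{\Z}(f, t-\frac{m}{n})$ and $f \mapsto \Z(\widehat{f}, -t)$ transform under $\ell \in L$ by the same power of the Jacobian $|\det\Ad(\ell)|_{\bar{\fn}}$ and the same power of $\chi(\ell)$.

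Next I would apply uniqueness. Since $\mathcal{O}_n$ is open and dense in $\bar{\fn}$ with complement of measure zero, and $L$ acts transitively on $\mathcal{O}_n$, the space of tempered distributions transforming by a fixed character of $L$ is one-dimensional for generic $t$. Hence there exists a meromorphic scalar $c(t)$, independent of $f$, with
\[
\frac{\pi^{\frac{nt}{2}}}{\Gamma_n(t)}\,\bar{\Z}\!\left(f, t-\tfrac{m}{n}\right)
\;=\; c(t)\,\frac{\pi^{\frac{n}{2}(-t+\frac{m}{n})}}{\Gamma_n(-t+\frac{m}{n})}\,\Z(\widehat{f}, -t)
\]
for every $f \in \mathcal{S}(\bar{\fn})$.

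Finally I would compute $c(t)$ by plugging in the Gaussian $f_0(X) = e^{-\pi\langle X, X\rangle}$, which equals its own Fourier transform under the identifications built into the paper. Since $f_0$ is $K\cap L$-invariant, the polar-coordinate formula of Proposition \ref{prop_polar_coord} together with Lemma \ref{lem_nabla} reduces both $\bar{\Z}(f_0, t-\frac{m}{n})$ and $\Z(\widehat{f_0}, -t)$ to Selberg-type integrals of the form
\[
\int_{\Omega}\prod_{j=1}^n e^{-\pi x_j^2}\, x_j^{\,t-\frac{m}{n}+e}\prod_{1\le i<j\le n}(x_i^2-x_j^2)^d\prod_{j=1}^n dx_j,
\]
evaluated at $t$ and at $-t+\frac{m}{n}$ respectively. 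The Macdonald--Mehta--Selberg formula evaluates each of these as a product of gamma functions which, combined with the explicit powers of $\pi$, reproduces $\pi^{-\frac{nt}{2}}\Gamma_n(t)$ up to the overall normalization constant of Proposition \ref{prop_polar_coord}; that constant appears identically on both sides and cancels, forcing $c(t)\equiv 1$. The main obstacle is precisely this last step: the Selberg-type integral has to yield exactly the product $\prod_{j=0}^{n-1}\Gamma\!\left(\frac{t-jd}{2}\right)$ with the correct powers of $\pi$, and the bookkeeping of measure normalizations and ranges of absolute convergence must be tracked carefully throughout. Steps one and two are formal consequences of the PVS theory in the spirit of \cite[Theorem 4.17]{Kimura03}.
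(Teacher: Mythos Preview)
The paper does not give its own proof of this theorem: it is stated with attribution to M\"uller (\cite{Muller}, and earlier \cite{Muller86}) and to the general fundamental theorem of prehomogeneous vector spaces \cite[Theorem~4.17]{Kimura03}, and is used as an input for the paper's results rather than derived there. So there is no in-paper proof to compare your proposal against.

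That said, your outline is precisely the standard Sato/Kimura strategy that underlies the cited references, so it is the ``right'' proof in spirit. Two points worth tightening. First, the uniqueness step is more delicate than you indicate: even with a single open $L$-orbit $\mathcal{O}_n$, one must rule out relatively invariant tempered distributions supported on the singular set $\{\bar{\nabla}=0\}$; this is handled in the PVS literature by a stratification/holonomy argument showing such contributions occur only at a discrete set of $t$, whence proportionality for generic $t$ and then for all $t$ by meromorphy. Second, in the Gaussian step you should be aware that $\widehat{f_0}$ is a Gaussian on $\fn$, not on $\bar{\fn}$, so the two sides become Selberg-type integrals on the two different (but isomorphic) spaces; the normalization of the pairing $\langle\cdot,\cdot\rangle$ in the paper is chosen so that $\langle F_j,E_k\rangle=\delta_{jk}$, which is exactly what makes the Gaussian self-dual and the Mehta--Selberg evaluation go through with the displayed $\Gamma_n(t)$. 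Your caveat about bookkeeping is well placed; the identity $m=n(d(n-1)+(e+1))$ is what makes the exponents match.
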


To show that the functional equation (\ref{eq:FT_S}) holds for functions in $I(s)$, we consider the integral
\begin{align*}
\bar{\Z}(F_s,t)=\int_{\bar{\fn}}F_s(X)\bar{\nabla}(X)^tdX, \textup{ for }F_s\in I(s).
\end{align*}

\begin{Lem}\label{lem:z_conv}
$\bar{\Z}(F_s,t)$ converges absolutely on $-(e+1)<\textup{Re}(t)<\textup{Re}(s)-d(n-1)$.
\end{Lem}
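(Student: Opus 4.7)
The plan is to reduce the question to elementary one-variable convergence estimates via the polar coordinates of Proposition \ref{prop_polar_coord}. Writing a generic $X \in \mathcal{O}_n$ as $X = k \cdot \sum_{j=1}^n x_j F_j$ with $k \in K\cap L$ and $(x_1,\ldots,x_n)\in \Omega$, Lemma \ref{lem_nabla} together with the $K\cap L$-invariance of $\bar{\nabla}$ gives $\bar{\nabla}(X) = \prod_{j=1}^n x_j$, while Lemma \ref{lem_F_s_phi} yields
\begin{equation*}
|F_s(X)| \leq \frac{\|\varphi\|_{\infty}}{\prod_{j=1}^n (1+x_j^2)^{(\textup{Re}(s)+m/n)/2}},
\end{equation*}
since $\varphi \in C^{\infty}(K/M)$ is bounded on the compact group $K$ and left translation preserves its sup norm. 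Combining these, $|\bar{\Z}(F_s,t)|$ is dominated by a constant times
\begin{equation*}
\int_{\Omega} \prod_{j=1}^n \frac{x_j^{\textup{Re}(t)+e}}{(1+x_j^2)^{(\textup{Re}(s)+m/n)/2}}\, \prod_{1\leq i<j\leq n}(x_i^2-x_j^2)^d \, dx_1\cdots dx_n.
\end{equation*}

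To dispose of the Vandermonde-like factor I use the crude estimate $(x_i^2 - x_j^2)^d \leq x_i^{2d}$, valid on $\Omega$ since $d \geq 0$ and $0 < x_j < x_i$ for $i<j$. Summing multiplicities yields $\prod_{i<j}(x_i^2-x_j^2)^d \leq \prod_{j=1}^n x_j^{2d(n-j)}$, and enlarging the region of integration from $\Omega$ to $(0,\infty)^n$ only increases the integrand's integral. Absolute convergence therefore follows from the finiteness of the separated product
\begin{equation*}
\prod_{j=1}^n \int_0^\infty \frac{x^{\textup{Re}(t)+e+2d(n-j)}}{(1+x^2)^{(\textup{Re}(s)+m/n)/2}}\, dx.
\end{equation*}

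Each one-variable integral converges exactly when its integrand is integrable at $0$ and at $\infty$. The worst case at the origin is $j=n$ (where the extra $2d(n-j)$ vanishes), giving $\textup{Re}(t) + e > -1$, i.e., $\textup{Re}(t) > -(e+1)$. The worst case at infinity is $j=1$; using the identity $m/n = d(n-1) + (e+1)$ from Section $2$, the requirement $\textup{Re}(t) + e + 2d(n-1) - \textup{Re}(s) - m/n < -1$ simplifies to $\textup{Re}(t) < \textup{Re}(s) - d(n-1)$. These match the two conditions defining the asserted strip. The only point that needs watching is whether the crude Vandermonde majorization is sharp enough, but since the resulting exponent inequalities coincide exactly with the stated endpoints, no finer estimate is required.
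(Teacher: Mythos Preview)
Your proof is correct and follows essentially the same route as the paper: write $\bar{\Z}(F_s,t)$ in polar coordinates via Proposition~\ref{prop_polar_coord}, Lemma~\ref{lem_nabla}, and Lemma~\ref{lem_F_s_phi}, then reduce absolute convergence to a product of one-variable integrals $\int_0^\infty x^{\alpha}(1+x^2)^{-\beta}\,dx$. The only difference is cosmetic: the paper expands $\prod_{i<j}(x_i^2-x_j^2)^d$ binomially, whereas you use the pointwise majorization $(x_i^2-x_j^2)^d \le x_i^{2d}$ on $\Omega$; both yield the same extremal exponents (worst at $0$ when $j=n$, worst at $\infty$ when $j=1$) and hence the same strip.
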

\begin{proof}
Combining Lemma \ref{lem_nabla}, Proposition \ref{prop_polar_coord}, and Lemma \ref{lem_F_s_phi}, 
we get
\begin{equation}\label{eq:z_polar}
\bar{\Z}(F_s,t)=c\int_{K\cap L}\left[\int_{\Omega}
\frac{(k^{-1}\cdot\varphi)\left(\kappa\left(\sum_{j=1}^nx_jF_j\right)\right)}{\prod_{j=1}^n(1+x_j^2)^{\frac{s+\frac{m}{n}}{2}}}
\prod_{j=1}^nx_j^{t+e}\prod_{1\leq i<j\leq n}(x_i^2-x_j^2)^d\prod_{j=1}^ndx_j\right]dk\,.
\end{equation}
Expanding $(x_i^2-x_j^2)^d$, we can write the integrand as a sum of terms
\begin{equation*}
(k^{-1}\cdot\varphi)\left(\kappa\left(\sum_{j=1}^nx_jF_j\right)\right)\prod_{j=1}^n\frac{x_j^{t+e+k}}{(1+x_j^2)^{\frac{s+\frac{m}{n}}{2}}},
\textup{ where }k=0, 2, \cdots, 2d(n-1).
\end{equation*}
Each of these integrals is bounded by a product of one variable integrals of the form
\begin{equation*}
\int_0^{\infty}\frac{x_j^{\textup{\Re}(t)+e+k}}{(1+x_j^2)^{\frac{\textup{\Re}(s)+\frac{m}{n}}{2}}}dx_j,
\end{equation*}
which converge if $-1<\textup{\Re}(t)+e+k<\textup{\Re}(s)+\frac{m}{n}-1$. This proves the lemma.
\end{proof}

\begin{Cor}
$F_s\in L^1(\bar{\fn})$ for $\Re(s)>d(n-1)$ and $F_s\in L^2(\bar{\fn})$ for $\Re(s)>-\frac{e+1}{2}$.
\end{Cor}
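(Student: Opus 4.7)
The plan is to deduce both integrability statements directly from Lemma \ref{lem:z_conv} together with the polar coordinate calculation underlying its proof, so very little new work is needed.

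For the $L^1$ statement, I would simply specialize Lemma \ref{lem:z_conv} to $t=0$. That lemma establishes absolute convergence of
\[
\bar{\Z}(F_s,t)=\int_{\bar{\fn}} F_s(X)\bar{\nabla}(X)^t\,dX
\]
on the strip $-(e+1)<\Re(t)<\Re(s)-d(n-1)$. The value $t=0$ lies in this strip precisely when $\Re(s)>d(n-1)$, since $e+1>0$ is automatic. Absolute convergence at $t=0$ is exactly the assertion that $F_s\in L^1(\bar{\fn})$.

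For the $L^2$ statement, I would rerun the polar coordinate reduction that is the heart of the proof of Lemma \ref{lem:z_conv}, but applied to $|F_s|^2$ in place of $F_s\,\bar{\nabla}^t$. Combining Proposition \ref{prop_polar_coord} with Lemma \ref{lem_F_s_phi} yields
\[
\int_{\bar{\fn}}|F_s(X)|^2\,dX
= c\int_{K\cap L}\int_{\Omega}\frac{\bigl|(k^{-1}\cdot\varphi)\bigl(\kappa\bigl(\textstyle\sum_{j} x_jF_j\bigr)\bigr)\bigr|^2}{\prod_{j=1}^n(1+x_j^2)^{\Re(s)+\frac{m}{n}}}\prod_{j=1}^n x_j^{\,e}\prod_{1\leq i<j\leq n}(x_i^2-x_j^2)^d\prod_{j=1}^n dx_j\,dk.
\]
Bounding $|\varphi|^2$ by $\|\varphi\|_\infty^2$ (which is finite because $\varphi$ is continuous on the compact group $K$) and treating the factor $\prod_{i<j}(x_i^2-x_j^2)^d$ exactly as in the proof of Lemma \ref{lem:z_conv}, the inner integral is dominated by a finite sum of products of one-variable integrals
\[
\int_0^\infty\frac{x^{\,e+k}}{(1+x^2)^{\Re(s)+\frac{m}{n}}}\,dx,\qquad k\in\{0,2,\ldots,2d(n-1)\}.
\]
Each such integral converges iff $e+k>-1$ and $2(\Re(s)+\tfrac{m}{n})>e+k+1$. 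The most restrictive exponent is $k=2d(n-1)$, and the resulting inequality $2\Re(s)+\tfrac{2m}{n}>e+2d(n-1)+1$ simplifies, via the Section 2 identity $\tfrac{m}{n}=d(n-1)+(e+1)$, to $\Re(s)>-\tfrac{e+1}{2}$.

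I do not anticipate any serious obstacle here: the only ingredients are the polar coordinate integral formula (already proved), the uniform bound on the compact picture factor by a constant, and a short arithmetic identity relating $m$, $n$, $d$, and $e$. The main thing to get right is the bookkeeping of exponents that produces the sharp bound $-\tfrac{e+1}{2}$ rather than a weaker one.
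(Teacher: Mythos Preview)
Your proposal is correct and follows essentially the same approach as the paper: the $L^1$ case is exactly the specialization $t=0$ in Lemma~\ref{lem:z_conv}, and for the $L^2$ case the paper phrases the reduction as ``$F_s\in L^2(\bar{\fn})$ if and only if $h_s^2\in L^1(\bar{\fn})$'' and then checks the one-variable convergence condition $e+k<2(\Re(s)+\tfrac{m}{n})-1$, which is precisely your polar-coordinate computation after bounding $|\varphi|^2$ by a constant.
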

\begin{proof}
If $t=0$ in Lemma \ref{lem:z_conv}, we get the condition on $s$ so that $F_s\in L^1(\bar{\fn})$.
Note also that $F_s\in L^2(\bar{\fn})$ if and only if $h_s^2\in L^1(\bar{\fn})$.
This is the case when $e+k<2\left(s+\frac{m}{n}\right)-1$ for all $k=0, 2, \cdots, 2d(n-1)$.
\end{proof}

In the same way we continue $\bar{\Z}(f,t)$, for $f\in \mathcal{S}(\bar{\fn})$, to a meromorphic function in $t\in \C$, we continue $\bar{\Z}(F_s,t)$, for $F_s\in I(s)$, meromorphically.

\begin{Lem}\label{lem:mero_t}
$\bar{\Z}(F_s,t)$ can be extended meromorphically to the range
\begin{equation*}
\bigcup_{k\in \Z_{\geq 0}}\left\{(s,t)\in \C^2:
-(e+1)-2k<\textup{Re}(t)<\textup{Re}(s)-d(n-1)-2k\right\}
\end{equation*}
\end{Lem}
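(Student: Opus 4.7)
The plan is to mimic the Bernstein--Sato style meromorphic continuation recorded earlier in this section for Schwartz functions, with $\mathcal{S}(\bar{\fn})$ replaced by $I(s)$. The key observation is that the constant-coefficient differential operator $\bar{\nabla}(\partial_X)^{2k}$ on $\bar{\fn}$ is the Lie algebra action of an element $D_k\in U(\bar{\fn})$ on $I(s)$: because $\bar{N}$ is abelian and acts on $I(s)$ by translations, the action of any $Z\in\bar{\fn}$ on $F_s$ is the directional derivative $Z\cdot F_s=-\partial_Z F_s$, in agreement with Lemma~\ref{lem_Lie_algebra_action}(3). Consequently $D_kF_s\in I(s)$ whenever $F_s\in I(s)$, and Lemma~\ref{lem:z_conv} applied to $D_kF_s$ ensures that $\bar{\Z}(D_kF_s,t)$ is holomorphic on $-(e+1)<\Re(t)<\Re(s)-d(n-1)$.

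Next I would establish the identity
\begin{equation*}
\bar{\Z}(D_kF_s,t)\;=\;b_k(t)\,\bar{\Z}(F_s,t-2k),
\end{equation*}
valid in the overlap strip $-(e+1)+2k<\Re(t)<\Re(s)-d(n-1)$, by integration by parts combined with the algebraic identity $\bar{\nabla}(\partial_X)^{2k}\bar{\nabla}(X)^t=b_k(t)\bar{\nabla}(X)^{t-2k}$ recorded earlier. Contributions near the singular set $\{\bar{\nabla}=0\}$ are handled by first working in a region of large $\Re(t)$ where $\bar{\nabla}^t$ vanishes to high order on the complement of the open orbit; the boundary terms at infinity are controlled by combining the pointwise bound on $F_s$ afforded by Lemma~\ref{lem_F_s_phi} with the convergence estimate in the proof of Lemma~\ref{lem:z_conv}.

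The identity then yields
\begin{equation*}
\bar{\Z}(F_s,u)\;=\;\frac{1}{b_k(u+2k)}\,\bar{\Z}(D_kF_s,u+2k),
\end{equation*}
whose right-hand side is meromorphic in $(s,u)\in\C^2$ on the strip $-(e+1)-2k<\Re(u)<\Re(s)-d(n-1)-2k$ with possible poles only at the zeros of $b_k(u+2k)$. Taking the union over $k\geq 0$ gives the claimed domain; the extensions for different $k$ agree on any overlap by the identity theorem, since $D_k=D_1^k$ in the symmetric algebra $U(\bar{\fn})$ and iteration of the identity gives $b_k(v)\bar{\Z}(F_s,v-2k)=b_{k-1}(v-2)b_1(v)\bar{\Z}(D_1F_s,v-2k+2)$, consistent with applying the one-step identity to $D_{k-1}F_s\in I(s)$.

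The main obstacle I anticipate is justifying integration by parts for $F_s\in I(s)$, since $F_s$ is not rapidly decreasing. The strategy is to introduce a cutoff at large radius and estimate the resulting boundary terms via the polar-coordinate expression (\ref{eq:z_polar}) used in the proof of Lemma~\ref{lem:z_conv}. This reduces the estimate to one-variable tail integrals of the form $\int_R^{\infty}x^{\Re(t)+e+k'}(1+x^2)^{-(\Re(s)+m/n)/2}\,dx$, possibly with additional polynomial factors coming from differentiating in the polar coordinates a bounded number of times. Provided $\Re(t)$ is in the strip where $\bar{\Z}(D_kF_s,t)$ converges absolutely, these tails tend to zero as $R\to\infty$, which kills the boundary contributions and secures the identity.
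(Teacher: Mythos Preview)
Your proposal is correct and follows essentially the same route as the paper: both arguments use that $\bar{\nabla}(\partial_X)^{2k}$ acts through the enveloping algebra so $\bar{\nabla}(\partial_X)^{2k}F_s\in I(s)$, establish the Bernstein--Sato identity $\bar{\Z}(\bar{\nabla}(\partial_X)^{2k}F_s,t)=b_k(t)\bar{\Z}(F_s,t-2k)$ as convergent integrals for sufficiently large $\Re(t)$ via integration by parts, and then use analyticity of the left side on the original strip (Lemma~\ref{lem:z_conv}) to continue the right side meromorphically. Your treatment of the boundary terms is more explicit than the paper's, but the argument is the same.
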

\begin{proof}
Note that $\bar{\nabla}^{\,t}(X)$ vanishes on the boundary $\bar{\nabla}(X)=0$ for $\Re(t)>0$ and
$F_s(X)$ vanishes at infinity for $\Re(s)>-\frac{m}{n}$.
Also, since each $\partial^{\alpha}$, $|\alpha|=1$, acts as an element 
in the enveloping algebra $\mathcal{U}(\fg)$, 
$\partial^{\alpha}(F_s)\in I(s)$ for any multi-index $\alpha$. 
So is $\bar{\nabla}(\partial_X)^{2k}(F_s)$ for any $k\in \Z_{\geq 0}$.
Therefore, for $F_s\in I(s)$
\begin{equation}\label{eq:b_k}
\bar{\Z}(\bar{\nabla}(\partial_X)^{2k}F_s, t)=b_k(t)\bar{\Z}(F_s,t-2k)
\end{equation}
is to hold as convergent integrals 
when $2k<\Re(t)<\Re(s)-d(n-1)$ by Lemma \ref{lem:z_conv}.
Since the left hand side is analytic on $-(e+1)<\textup{Re}(t)<\textup{Re}(s)-d(n-1)$, 
$\bar{\Z}(F_s,t)$ continues to a meromorphic function 
on $-(e+1)-2k<\textup{Re}(t)<\textup{Re}(s)-d(n-1)-2k$ for all $k\in \Z_{\geq 0}$.
\end{proof}

Much of background materials in sections $2$, $3$, and $4$ comes from \cite{BarchiniSepanskiZierau06}, \cite{DvorskySahi03}, and  \cite{KosSah93}.

\section{Proof of the main results}
In this section we prove the following theorems.
\begin{Thm}\label{thm:mero_non}
Let $F_s\in I(s)$. Then the family of integrals $\bar{\Z}(F_s,t)$ is complex analytic on $-(e+1)<\textup{Re}(t)<\textup{Re}(s)-d(n-1)$
and has a meromorphic continuation to all of $\C^2$.
\end{Thm}

\begin{Thm}\label{thm:extended_F_E}
Let $F_s\in I(s)$. Then the functional equation
\begin{equation}\label{eq:FT_non}
\frac{\pi^{\frac{nt}{2}}}{\Gamma_n(t)}\bar{\Z}(F_s,t-\frac{m}{n})
=\frac{\pi^{\frac{n}{2}(-t+\frac{m}{n})}}{\Gamma_n(-t+\frac{m}{n})}\Z(\widehat{F_s},-t)
\end{equation}
holds as meromorphic functions in $(s,t)\in \C^2$.
\end{Thm}
We will use the integral formula Proposition \ref{prop_polar_coord} for the polar coordinates 
to reduce the integral $\Z(F_s,t)$ to the integrals over a noncompact radial set and a compact set.
The main part of the proof of Theorem \ref{thm:mero_non} is to have a meromorphic continuation of the integral over the noncompact set. 
We use the formulas of the action by the Lie subalgebras $\mathfrak{sl}(2,\R)_j$. 
These formulas give us appropriate differential operators that we will use to extend the defining range of $t$ and $s$. 
On the other hand, Theorem \ref{thm:extended_F_E} can be proved using \cite[Lemma 4.34]{Kimura03} and Theorem \ref{thm:mero_non}.

\subsection{Proof of Theorem \ref{thm:mero_non}}
$(\R^+)^n$ contains $n!$ disjoint cones of the form
\begin{equation*}
\sigma\Omega=\{{\bf x}=(x_1,\ldots,x_n)\,:\, x_{\sigma(1)}>\cdots>x_{\sigma(n)}>0\}.
\end{equation*}
Here $S_n$ is the symmetric group on $n$ letters.
Note also that the disjoint union $\cup_{\sigma\in S_n}\sigma\Omega$ is dense and open in $(\R^+)^n$.
Then integrand of (\ref{eq:z_polar}) is invariant under the action of permuting $x_j$'s 
except the possible negative sign from $\prod_{1\leq i<j\leq n}(x_i^2-x_j^2)^d$.
We define the sign function on $(\R^+)^N$ as
\begin{equation*}
\varepsilon(x_1,\cdots,x_N)=\textup{sign}\left(\prod_{i<j}(x_i-x_j)\right).
\end{equation*}
Then we can rewrite (\ref{eq:z_polar}) as
\begin{equation*}
\bar{\Z}(F_s,t)=\frac{c}{n!}\int_M\left[\int_{(\R^+)^n}
\frac{(m^{-1}\cdot\varphi)\left(\kappa\left(\sum_{j=1}^nx_jF_j\right)\right)}{\prod_{j=1}^n(1+x_j^2)^{\frac{s+\frac{m}{n}}{2}}}
\prod_{j=1}^nx_j^{t+e}\prod_{1\leq i<j\leq n}(x_i^2-x_j^2)^d\,\varepsilon({\bf x})^d\prod_{j=1}^ndx_j\right]dm\,.
\end{equation*}
By expanding $(x_i^2-x_j^2)^d$, the integral $\bar{\Z}(F_s,t)$ is a finite linear combination of integrals of the form
\begin{equation}\label{eq:z_polar_sign}
\int_M\left[\int_{(\R^+)^n}
\frac{(m^{-1}\cdot\varphi)\left(\kappa\left(\sum_{j=1}^nx_jF_j\right)\right)}{\prod_{j=1}^n(1+x_j^2)^{\frac{s+\frac{m}{n}}{2}}}
\prod_{j=1}^nx_j^{t+c_j}\varepsilon({\bf x})^d\prod_{j=1}^ndx_j\right]dm\,,
\end{equation}
where $e\leq c_j\leq e+2d(n-1)$.
As the first step to prove Theorem \ref{thm:mero_non}, 
we assume that $m=I$ in (\ref{eq:z_polar_sign}) and consider the integral over the noncompact set $(\R^+)^n$.
To make an induction argument work, we need to consider a more general integral than (\ref{eq:z_polar_sign}).
We introduce some notations.
Let ${\bf a}=(a_1,\cdots,a_n)\in \N^n$.
Let ${\bf b}=(b_1,\cdots,b_n)$, ${\bf c}=(c_1,\cdots,c_n)\in (\R^+)^n$.
Let $N$ be a positive integer with $N\leqslant n$.
Let ${\bf p}=(p_1,\cdots,p_N)\in \Z^N$ with $1\leqslant p_1<\cdots<p_N\leqslant n$.
Let ${\bf q}=(q_1,\cdots,q_n)\in \Z^n$ with $1\leqslant q_j \leqslant n$ for all $j=1,\cdots,n$.
For ${\bf x}=(x_1,\cdots,x_n)\in (\R^+)^n$, we define ${\bf x}_{{\bf p}}=(x_{p_1},\cdots,x_{p_N})\in (\R^+)^N$ and 
$[{\bf x}_{{\bf q}}]=\kappa\left(\sum_{j=1}^nx_{q_j}F_j\right)\in K$.

\begin{Prop}\label{prop:mero_cont_noncompact}
Let $\varphi\in C^{\infty}\left(K/K\cap M\right)$. Then the integral
\begin{align}
\T_n \left(s{\bf a}+{\bf b},t{\bf a}+{\bf c},{\bf p},\varphi[{\bf x}_{\bf q}]\right)
=\int_{(\R^+)^n} \prod_{j=1}^n\frac{x_j^{a_jt+c_j}}{(1+x_j^2)^{\frac{a_js+b_j+\frac{m}{n}}{2}}}\cdot
\varepsilon({\bf x}_{{\bf p}})^d\,\varphi[{\bf x}_{{\bf q}}] dx_n\cdots dx_1 
\end{align}
converges absolutely and is complex analytic on 
\begin{equation}\label{convergence_range_T_D_0}
\mathcal{D}_0=\left\{(s,t)\in \C^2 \,:\, 
\max_{j}\left\{\frac{-1-c_j}{a_j}\right\}<\textup{Re}(t)<\textup{Re}(s)+\min_{j}\left\{\frac{b_j+\frac{m}{n}-1-c_j}{a_j}\right\}\right\}.
\end{equation}
Moreover, it has a meromorphic continuation to all of $(s,t)\in \C^2$. 
\end{Prop}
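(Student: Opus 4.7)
\textbf{Step 1: Convergence on $\mathcal{D}_0$.} Since $K/K\cap M$ is compact, $\varphi$ is bounded on it, and $|\varepsilon({\bf x}_{\bf p})^d|\le 1$. The integrand is therefore dominated in absolute value by a constant multiple of $\prod_{j=1}^n x_j^{a_j\Re(t)+c_j}(1+x_j^2)^{-(a_j\Re(s)+b_j+m/n)/2}$, which by Fubini factors into one-variable beta-type integrals. The $j$-th factor converges exactly when $-1-c_j<a_j\Re(t)<a_j\Re(s)+b_j+\frac{m}{n}-1-c_j$, and intersecting over $j$ yields $\mathcal{D}_0$. Complex analyticity on $\mathcal{D}_0$ then follows from differentiation under the integral sign with the same bound.

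\textbf{Step 2: Meromorphic continuation via integration by parts.} The plan is to derive, for each $k$, a functional equation shifting $b_k\mapsto b_k+2$ (and analogously $c_k\mapsto c_k+2$) that strictly enlarges $\mathcal{D}_0$ in one direction, and then to iterate. Using $x_k^2=(1+x_k^2)-1$, I would compute
\begin{align*}
\frac{\partial}{\partial x_k}\!\left[\frac{x_k^{a_kt+c_k+1}}{(1+x_k^2)^{(a_ks+b_k+m/n)/2}}\right]
&=(a_kt+c_k+1-a_ks-b_k-\tfrac{m}{n})\frac{x_k^{a_kt+c_k}}{(1+x_k^2)^{(a_ks+b_k+m/n)/2}}\\
&\quad +(a_ks+b_k+\tfrac{m}{n})\frac{x_k^{a_kt+c_k}}{(1+x_k^2)^{(a_ks+b_k+m/n+2)/2}}.
\end{align*}
Multiplying by $\varepsilon({\bf x}_{\bf p})^d\varphi[{\bf x}_{\bf q}]\prod_{j\ne k}\tfrac{x_j^{a_jt+c_j}}{(1+x_j^2)^{(a_js+b_j+m/n)/2}}$, integrating over $(\R^+)^n$, and integrating by parts in $x_k$ (whose boundary terms vanish throughout $\mathcal{D}_0$), one obtains
\begin{equation*}
(a_ks+b_k+\tfrac{m}{n})\,\T_n(s{\bf a}+{\bf b}+2{\bf e}_k,\,t{\bf a}+{\bf c},\,{\bf p},\varphi[{\bf x}_{\bf q}])=-P_k(s,t)\,\T_n(s{\bf a}+{\bf b},\,t{\bf a}+{\bf c},\,{\bf p},\varphi[{\bf x}_{\bf q}])+R_k(s,t),
\end{equation*}
where $P_k(s,t)$ is affine and $R_k$ is a finite sum of $\T_n$'s coming from $\partial_{x_k}(\varepsilon({\bf x}_{\bf p})^d\varphi[{\bf x}_{\bf q}])$. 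Solving for the leftmost $\T_n$ extends it meromorphically to the region where $b_k$ has been increased by $2$ --- strictly larger than $\mathcal{D}_0$ in the $\Re(t)$ upper-bound. A parallel identity starting from $\partial/\partial x_k[x_k^{a_kt+c_k+3}(1+x_k^2)^{-(a_ks+b_k+m/n)/2}]$ shifts $c_k\mapsto c_k+2$ and enlarges the opposite side of $\mathcal{D}_0$. Iterating these two operators in all coordinates $k$ covers every $(s,t)\in\C^2$ in finitely many steps, with only the polynomial factors $P_k$ (and their shifted versions) appearing in the denominators; this yields the meromorphic continuation.

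\textbf{The main obstacle.} The essential technical ingredient is showing that $R_k$ above is genuinely a finite sum of $\T_n$'s of the prescribed form, i.e.,~that the class of integrands defining $\T_n$ is closed under the operation $\varphi[{\bf x}_{\bf q}]\mapsto\partial_{x_k}(\varepsilon({\bf x}_{\bf p})^d\varphi[{\bf x}_{\bf q}])$; this is precisely why $\T_n$ is parameterized by the general data $({\bf a},{\bf b},{\bf c},{\bf p},{\bf q},\varphi)$. For the $\varphi$-factor I would invoke the Lie algebra action: writing $F_s(X)=h_s(X)(\varphi\circ\kappa)(X)$ from Lemma \ref{lem_F_s_phi}, Lemma \ref{lem_Lie_algebra_action}(3) gives $F_k\cdot F_s=-\partial F_s/\partial x_k$, and since $F_k\cdot F_s\in I(s)$ is itself of the form $h_s\cdot(\tilde\varphi\circ\kappa)$ for some $\tilde\varphi\in C^{\infty}(K/K\cap M)$ depending polynomially on $s$, one can solve
\begin{equation*}
\frac{\partial(\varphi\circ\kappa)}{\partial x_k}=\left(s+\tfrac{m}{n}\right)\frac{x_k}{1+x_k^2}\,\varphi\circ\kappa-\tilde\varphi\circ\kappa,
\end{equation*}
and the rational prefactor $x_k/(1+x_k^2)$ is absorbed into the $\T_n$-template by the shifts $c_k\mapsto c_k+1$, $b_k\mapsto b_k+2$. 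For the $\varepsilon({\bf x}_{\bf p})^d$-factor, when $d$ is an even integer it equals $1$ identically and contributes nothing; in the remaining cases the apparent distributional contributions on the diagonals $\{x_{p_i}=x_{p_j}\}$ must be shown to be killed by the polynomial factors $(x_{p_i}^2-x_{p_j}^2)^d$ implicit in the integrand as it arose from the expansion (\ref{eq:z_polar_sign}). Verifying this closure in full generality is the main hurdle; once established, Steps 1--2 deliver the conclusion.
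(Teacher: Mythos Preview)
Your Step 1 is fine and matches the paper. The genuine gap is in Step 2, in your handling of $\varepsilon({\bf x}_{\bf p})^d$ when $d$ is odd. You propose that the distributional contributions from $\partial_{x_k}\varepsilon({\bf x}_{\bf p})$ on the diagonals $\{x_k=x_{p_i}\}$ are ``killed by the polynomial factors $(x_{p_i}^2-x_{p_j}^2)^d$ implicit in the integrand.'' This is incorrect: those polynomial factors were expanded out \emph{before} the $\T_n$-template was introduced, and the Proposition is stated for general data $({\bf a},{\bf b},{\bf c},{\bf p},{\bf q},\varphi)$, so no such vanishing is available in this generality. The jump contributions genuinely survive, and what they produce after setting $x_k=x_{p_i}$ is not a $\T_n$ but a $\T_{n-1}$ (one free variable has been eliminated). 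Thus the class $\{\T_n\}$ is \emph{not} closed under your operation, and your iteration cannot proceed as written.

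The paper resolves this by proving the Proposition by \emph{induction on $n$}. Integrating by parts on the subintervals of $(0,\infty)$ cut out by the other $x_{p_i}$'s (using the Lie-algebra operators $D_{s,t}^l=tE_l+(s-t+n-2)F_l$ rather than your bare identity, though this is a secondary difference), one obtains a functional equation
\[
\T_n({\bf s},{\bf t},{\bf p},\eta_0)=\gamma_n({\bf s}:{\bf t})\,\T_n({\bf s}-2{\bf a},{\bf t}-{\bf a},{\bf p},\varphi[{\bf x}_{\bf q}])+\E_{n-1}({\bf s},{\bf t},{\bf p},\varphi[{\bf x}_{\bf q}]),
\]
where $\E_{n-1}$ is a finite linear combination of $\T_{n-1}$'s coming exactly from those diagonal boundary terms. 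By the inductive hypothesis $\E_{n-1}$ is already meromorphic on all of $\C^2$; solving for $\T_n({\bf s}-2{\bf a},{\bf t}-{\bf a},\ldots)$ enlarges the domain, and iterating finishes. The base case $n=1$ is immediate since then $\varepsilon\equiv 1$. Your computation of $\partial_{x_k}(\varphi\circ\kappa)$ via the Lie algebra is correct and is part of the mechanism, but the missing structural idea is this induction on $n$ to absorb the $\varepsilon$-jumps.
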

The proof of the convergence range $\mathcal{D}_0$ is analogous to the proof of Lemma \ref{lem:z_conv}.
The main ingredient to prove Proposition \ref{prop:mero_cont_noncompact} is to apply some differential operators from the Lie algebra action.
For $j=1,\cdots, n$, we define differential operators $D_{s,t}^j$ by the action of the Lie algebra element
\begin{equation*}
\begin{pmatrix}0&t\\s-t+n-2&0\end{pmatrix}_j=tE_j+(s-t+n-2)F_j.
\end{equation*}
Then we have
\begin{equation*}
D_{s,t}^j=\left(s+\frac{m}{n}\right)t\,x_j+t\left(1+x_j^2\right)\frac{\partial}{\partial x_j}-\left(s+\frac{m}{n}-2\right)\frac{\partial}{\partial x_j}\,, 
\end{equation*}
by Lemma \ref{lem_Lie_algebra_action}. Let
\begin{equation*}
{D_{s,t}^j}^{\dagger}=\left(s+\frac{m}{n}\right)t\,x_j-t\frac{\partial}{\partial x_j}\circ\left(1+x_j^2\right)
+\left(s+\frac{m}{n}-2\right)\frac{\partial}{\partial x_j}. 
\end{equation*}
Then ${D_{s,t}^j}^{\dagger}$ is the formal adjoint of $D_{s,t}^j$ and we have
\begin{equation}\label{eq:dagger}
{D_{s,t}^j}^{\dagger}\,x_j^t=\left(s+\frac{m}{n}-t-2\right)t\,x_j^{t-1}\left(1+x_j^2\right).
\end{equation}
We shall denote the constant $\left(s+\frac{m}{n}-t-2\right)t$ by $(s:t)$.
For a function $f$ on $(a,b)$, we set
\begin{align*}
&[f(x)]_{x\rightarrow a^+}^{x\rightarrow b^-}:=\lim_{x\rightarrow b^-}f(x)-\lim_{x\rightarrow a^+}f(x)\quad\textup{and}\\
&f(x)|_{x\rightarrow b^-}:=\lim_{x\rightarrow b^-}f(x)
\end{align*}
if the limits exist.
By integration by parts, it is easily proved
\begin{equation}\label{eq:parts_adj}
\int_a^bf(x_l)\left(D_{s,t}^l\,g(x_l)\right)dx_l=\int_a^b\left({D_{s,t}^l}^{\dagger}\,f(x_l)\right)g(x_l)dx_l
+\left[\left\{t\left(1+x_l^2\right)-\left(s+\frac{m}{n}-2\right)\right\}f(x_l)g(x_l)\right]_{x_l\rightarrow a^+}^{x_l\rightarrow b^-}
\end{equation}
for all $(s,t)\in \C^2$ at which each term is defined.
We define $E^l(s,t,{\bf p},\varphi[{\bf x}_{{\bf q}}])$ as $0$ if $d$ is even or $d$ is odd and $l\neq p_i$ for all $i=1,\cdots,N$.
If $d$ is odd and $l=p_i$ for some $i=1,\cdots,N$, then
\begin{equation*}
E^l(s,t,{\bf p},\varphi[{\bf x}_{\bf q}])=
\sum_{_{(p_i\neq l)}^{\,\,i=1}}^N \frac{2\cdot\left\{t(1+x_{p_i}^2)-(s+\frac{m}{n}-2)\right\}x_{p_i}^t}{(1+x_{p_i}^2)^{\frac{s+\frac{m}{n}}{2}}}\,\,
\left( \varepsilon({\bf x}_{{\bf p}})\varphi[{\bf x}_{{\bf q}}]\right)\Big|_{x_l\rightarrow x_{p_i}^-}\,.
\end{equation*}
Note that $E^l$ does not contain the variable $x_l$.

\begin{Lem}\label{lem_one_operator}
For $(s,t)\in \C^2$ with $0<\Re(t)<\Re(s)+\frac{m}{n}-2$, 
\begin{equation*}
\int_0^{\infty} x_l^t\,\,\varepsilon({\bf x}_{{\bf p}})^dD_{s,t}^l\left(\frac{\varphi[{\bf x}_{{\bf q}}]}
{(1+x_l^2)^{\frac{s+\frac{m}{n}}{2}}}\right)dx_l
=(s:t)\int_0^{\infty} x_l^{t-1}\,\,\varepsilon({\bf x}_{{\bf p}})^d\left(\frac{\varphi[{\bf x}_{{\bf q}}]}
{(1+x_l^2)^{\frac{s-2+\frac{m}{n}}{2}}}\right) dx_l+E^l(s,t;{\bf p};\varphi[{\bf x}_{\bf q}]). 
\end{equation*}
\end{Lem}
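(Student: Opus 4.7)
\emph{Proof sketch.} The plan is to apply the integration-by-parts identity \eqref{eq:parts_adj} on $(0,\infty)$ in the variable $x_l$, taking $f(x_l) = x_l^t\,\varepsilon({\bf x}_{\bf p})^d$ and $g(x_l) = \varphi[{\bf x}_{\bf q}]\,(1+x_l^2)^{-\frac{s+m/n}{2}}$. Combined with \eqref{eq:dagger}, which states ${D_{s,t}^l}^{\dagger} x_l^t = (s:t)\,x_l^{t-1}(1+x_l^2)$, this immediately produces the main term on the right-hand side after absorbing the extra factor $(1+x_l^2)$ into the exponent of the denominator. The task is then to verify that the boundary contributions produced by \eqref{eq:parts_adj} combine to give exactly $E^l(s,t;{\bf p};\varphi[{\bf x}_{\bf q}])$.

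First I would dispose of the endpoint terms at $x_l \to 0^+$ and $x_l \to \infty$. The quantity $\{t(1+x_l^2)-(s+\frac{m}{n}-2)\}\,f(x_l)\,g(x_l)$ behaves like $x_l^t$ near $0$ and like $x_l^{t+2-s-m/n}$ at infinity, so both limits vanish under the hypothesis $0<\Re(t)<\Re(s)+\frac{m}{n}-2$. Next I would dispatch the trivial cases in which the integrand is already smooth in $x_l$ on $(0,\infty)$: when $d$ is even, $\varepsilon^d\equiv 1$; when $d$ is odd but $l \notin \{p_1,\ldots,p_N\}$, the factor $\varepsilon({\bf x}_{\bf p})$ is independent of $x_l$. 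In both of these situations there are no interior boundary contributions and $E^l = 0$ by definition, so the lemma is immediate.

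The substantive case is $d$ odd with $l = p_{i_0}$ for some $i_0$. Then $\varepsilon({\bf x}_{\bf p})$ flips sign whenever $x_l$ crosses one of the values $x_{p_i}$ with $i\neq i_0$. I would partition $(0,\infty)$ at these crossing values (discarding the measure-zero stratum on which two of the $x_{p_j}$'s coincide), apply \eqref{eq:parts_adj} on each open subinterval, and add the results. At each interior crossing $x_l = x_{p_i}$, the right endpoint of the subinterval below and the left endpoint of the subinterval above contribute jointly $\{t(1+x_{p_i}^2)-(s+\frac{m}{n}-2)\}\,x_{p_i}^t\,g(x_{p_i})\,[\varepsilon^d|_{x_l\to x_{p_i}^-} - \varepsilon^d|_{x_l\to x_{p_i}^+}]$; since $d$ is odd and $\varepsilon$ flips sign across the crossing, this bracket equals $2\,\varepsilon({\bf x}_{\bf p})|_{x_l \to x_{p_i}^-}$. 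Summing over $i \neq i_0$ recovers $E^l$ precisely.

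The main obstacle I anticipate is the careful sign bookkeeping at each jump: verifying that the two one-sided limits combine to give the factor $2$ with the stated $x_l \to x_{p_i}^-$ convention, and that the measure-zero stratum where several of the $x_{p_j}$'s collide truly contributes nothing to either the integral or the boundary sum.
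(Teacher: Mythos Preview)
Your proposal is correct and matches the paper's approach. The paper's own proof is extremely terse --- it simply says to integrate by parts on the subintervals of $(0,\infty)$ determined by the partition $\{x_1,\ldots,\widehat{x_l},\ldots,x_n\}$ and notes that the endpoint contributions at $0^+$ and $\infty$ vanish on the stated range; your write-up fills in exactly the details the paper omits, including the case split on the parity of $d$ and whether $l$ occurs among the $p_i$, and the sign bookkeeping showing that each interior jump contributes $2\,\varepsilon({\bf x}_{\bf p})|_{x_l\to x_{p_i}^-}$. The only cosmetic difference is that the paper partitions at all of $x_1,\ldots,\widehat{x_l},\ldots,x_n$ rather than just at the $x_{p_i}$ with $p_i\neq l$, but the extra partition points contribute nothing since $\varepsilon({\bf x}_{\bf p})$ does not jump there.
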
 
\begin{proof}
We apply the integration by parts on the subintervals given by the partition $\{x_1,\cdots, \widehat{x_l},\cdots,x_n\}$ on $(0,\infty)$.
We note that 
\begin{equation}\label{eq:boundary_0}
\left[\left\{t\left(1+x_l^2\right)-\left(s+\frac{m}{n}-2\right)\right\}x_l^t\,\,\varepsilon({\bf x}_{{\bf p}})^d\left(
\frac{\varphi[{\bf x}_{{\bf q}}]}{(1+x_l^2)^{\frac{s+\frac{m}{n}}{2}}}\right) \right]_{x_l\rightarrow 0^+}^{x_l\rightarrow {\infty}}=0
\end{equation}
on  $0<\Re(t)<\Re(s)+\frac{m}{n}-2$.
This proves the lemma.
\end{proof}

The equation in Lemma \ref{lem_one_operator} can be used 
to compute an integral whose integrand contains several differential operators of the form $D_{s,t}^l$.
Let $a$ be a positive integer. We define
\begin{equation*}
\widetilde{D}_{s,t}^l=D_{s,t}^l\circ\frac{1}{(1+x_l^2)}\,.
\end{equation*}
Define $\psi_a[{\bf x}]=\varphi[{\bf x}_{\bf q}]$ and $\psi_{a-1},\cdots,\psi_0$ inductively by the equations 
\begin{equation*}
\frac{\psi_k[{\bf x}]}{(1+x_l^2)^{\frac{s-2k+\frac{m}{n}}{2}}}
=D_{s-2k,t-k}^l\frac{\psi_{k+1}[{\bf x}]}{(1+x_l^2)^{\frac{s-2k+\frac{m}{n}}{2}}}
\quad\textup{for  }k=0,1,\cdots,a-1.
\end{equation*}
Then we have
\begin{equation*}
\frac{\psi_0[{\bf x}]}{(1+x_l^2)^{\frac{s+\frac{m}{n}}{2}}}
=\left(\prod_{k=0}^{a-1}\widetilde{D}_{s-2k,t-k}^l\right)\frac{\varphi[{\bf x}_{\bf q}]}{(1+x_l^2)^{\frac{s-2a+\frac{m}{n}}{2}}}\,.
\end{equation*}
Since $D_{s-2k,t-k}$ acts as an element in the Lie algebra on $I(s-2k)$,
it preserve the representation space $I(s-2k)$.
Therefore, the condition that $\varphi\in C^{\infty}(K/K\cap M)$ implies $\psi_k\in C^{\infty}(K/K\cap M)$ for all $k=0,\cdots, a-1$.
We let $\gamma_0(s:t)=1$. Define
\begin{align*}
&\gamma_k(s:t)=\prod_{r=0}^{k-1}(s-2r:t-r)\textup{ for }k=1,\cdots,a-1\quad\textup{and}\\ 
&\E^{l,a}(s,t,{\bf p},\varphi[{\bf x}_{{\bf q}}])=\sum_{k=0}^{a-1}\gamma_k(s:t)E^l(s-2k,t-k,{\bf p},\psi_{k+1}[{\bf x}])\,.
\end{align*}
We use the induction on $k$ (for $k=0,\cdots,a$) to get the following statement:
\begin{align*}
\int_0^{\infty} x_l^t\,\,\varepsilon({\bf x}_{{\bf p}})^d\frac{\psi_0[{\bf x}]}{(1+x_l^2)^{\frac{s+\frac{m}{n}}{2}}}dx_l
=&\gamma_k(s:t)\int_0^{\infty} x_l^{t-k}\,\,\varepsilon({\bf x}_{{\bf p}})^d
\frac{\psi_k[{\bf x}]}{(1+x_l^2)^{\frac{s-2k+\frac{m}{n}}{2}}} dx_l\\
&+\sum_{r=0}^{k-1}\gamma_r(s:t)E^l(s-2r,t-r,{\bf p},\psi_{r+1}[{\bf x}])\,.
\end{align*}
Then the lemma below follows when $k=a$. The convergence range is from (\ref{eq:boundary_0}).
\begin{Lem}\label{lem_operators}
For $(s,t)\in \C^2$ with $a-1<\textup{Re}(t)<\textup{Re}(s)-(a-1)+\frac{m}{n}-1$, 
\begin{equation*}
\int_0^{\infty} x_l^t\,\varepsilon({\bf x}_{{\bf p}})^d\frac{\psi_0[{\bf x}]}{(1+x_l^2)^{\frac{s+\frac{m}{n}}{2}}}dx_l
=\gamma_a(s:t)\int_0^{\infty} x_l^{t-a}\,\varepsilon({\bf x}_{{\bf p}})^d
\frac{\varphi[{\bf x}_{{\bf q}}]}{(1+x_l^2)^{\frac{s-2a+\frac{m}{n}}{2}}} dx_l+\E^{l,a}(s,t,{\bf p},\varphi[{\bf x}_{{\bf q}}])\,.
\end{equation*}
\end{Lem}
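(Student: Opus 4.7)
The plan is to establish the lemma by induction on $k$ running from $0$ up to $a$, namely to prove for each such $k$ that
\begin{align*}
\int_0^{\infty} x_l^t\,\varepsilon({\bf x}_{{\bf p}})^d\frac{\psi_0[{\bf x}]}{(1+x_l^2)^{\frac{s+\frac{m}{n}}{2}}}dx_l
&=\gamma_k(s{:}t)\int_0^{\infty} x_l^{t-k}\,\varepsilon({\bf x}_{{\bf p}})^d\frac{\psi_k[{\bf x}]}{(1+x_l^2)^{\frac{s-2k+\frac{m}{n}}{2}}}\,dx_l\\
&\quad+\sum_{r=0}^{k-1}\gamma_r(s{:}t)\,E^l(s-2r,t-r,{\bf p},\psi_{r+1}[{\bf x}]).
\end{align*}
The statement of the lemma is precisely the case $k=a$, since $\psi_a[{\bf x}]=\varphi[{\bf x}_{\bf q}]$. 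The base case $k=0$ is immediate: $\gamma_0(s{:}t)=1$, the empty sum vanishes, and the two sides agree tautologically.

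For the inductive step from $k$ to $k+1$, I would substitute the defining relation
\begin{equation*}
\frac{\psi_k[{\bf x}]}{(1+x_l^2)^{\frac{s-2k+\frac{m}{n}}{2}}}=D_{s-2k,\,t-k}^{\,l}\!\left(\frac{\psi_{k+1}[{\bf x}]}{(1+x_l^2)^{\frac{s-2k+\frac{m}{n}}{2}}}\right)
\end{equation*}
into the integral appearing on the right of the induction hypothesis and then apply Lemma \ref{lem_one_operator} with the parameters $(s,t)$ there replaced by $(s-2k,\,t-k)$ and with $\varphi[{\bf x}_{\bf q}]$ replaced by $\psi_{k+1}[{\bf x}]$. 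This substitution is legitimate because $D_{s-2k,t-k}^{\,l}$ arises from an element of $\mathfrak{sl}(2,\R)_l$ acting on $I(s-2k)$; the representation preserves the space, so $\psi_{k+1}$ remains a smooth $K\cap M$-invariant function on $K$ and is admissible in the role played by $\varphi[{\bf x}_{\bf q}]$ in Lemma \ref{lem_one_operator}. The conclusion of Lemma \ref{lem_one_operator} then yields exactly the next integral in the induction, multiplied by the extra factor $(s-2k{:}t-k)$ which promotes $\gamma_k(s{:}t)$ to $\gamma_{k+1}(s{:}t)$, together with the boundary contribution $\gamma_k(s{:}t)\,E^l(s-2k,t-k,{\bf p},\psi_{k+1}[{\bf x}])$, which is exactly the $r=k$ summand required to extend the trailing sum by one term. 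After $a$ steps the running sum coincides with $\E^{l,a}(s,t,{\bf p},\varphi[{\bf x}_{\bf q}])$ by definition.

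For the convergence range, Lemma \ref{lem_one_operator} applied at stage $k$ requires $0<\Re(t-k)<\Re(s-2k)+\tfrac{m}{n}-2$, i.e.\ $k<\Re(t)<\Re(s)-k+\tfrac{m}{n}-2$; intersecting these conditions across $k=0,1,\dots,a-1$ combines with the vanishing of the boundary term in (\ref{eq:boundary_0}) to give the stated open set $a-1<\Re(t)<\Re(s)-(a-1)+\tfrac{m}{n}-1$. The main obstacle, and really the only point that warrants care, is verifying that each inductively constructed $\psi_{k+1}$ genuinely lies in $C^{\infty}(K/K\cap M)$ so that Lemma \ref{lem_one_operator} applies afresh at the next stage, and that the boundary contributions at the interior partition points $\{x_1,\dots,\widehat{x_l},\dots,x_n\}$ produced by the integration by parts (\ref{eq:parts_adj}) telescope cleanly into the function $E^l$ without generating spurious terms; both facts follow from the fact that $D_{s-2k,t-k}^{\,l}$ preserves the principal series $I(s-2k)$ and from the very design of $E^l$, which packages precisely the jumps of $\varepsilon({\bf x}_{\bf p})^d$ at the interior partition points of the $x_l$-line.
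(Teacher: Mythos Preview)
Your proposal is correct and follows essentially the same approach as the paper: the paper proves the lemma by exactly the induction on $k$ you describe, invoking Lemma~\ref{lem_one_operator} with shifted parameters $(s-2k,t-k)$ at each step and noting that $\psi_k\in C^{\infty}(K/K\cap M)$ because $D_{s-2k,t-k}^l$ preserves $I(s-2k)$. The paper records the inductive statement just before the lemma and says the convergence range comes from~(\ref{eq:boundary_0}); your more explicit accounting of the intersected constraints and the role of the interior jump terms packaged by $E^l$ is a welcome elaboration of what the paper leaves implicit.
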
   

Recall ${\bf a}$, ${\bf b}$, ${\bf c}$, $N$, ${\bf p}$ and $\varphi[{\bf x}_{\bf q}]$ as in Proposition \ref{prop:mero_cont_noncompact}.
Let ${\bf s}=s{\bf a}+{\bf b}=(s_1,\cdots,s_n)$ and ${\bf t}=t{\bf a}+{\bf c}=(t_1,\cdots,t_n)$.
Define $\eta_n[{\bf x}]=\varphi[{\bf x}_{\bf q}]$ and $\eta_{n-1},\cdots,\eta_0$ by the equations inductively
\begin{equation*}
\frac{\eta_{k-1}[{\bf x}]}{(1+x_k^2)^{\frac{s_k+\frac{m}{n}}{2}}}=\left(\prod_{r=0}^{a_k-1}\widetilde{D}_{s_k-2r,t_k-r}^k\right)
\frac{\eta_k[{\bf x}]}{(1+x_k^2)^{\frac{s_k-2a_k+\frac{m}{n}}{2}}}\quad\textup{for  }n\geqslant k\geqslant 1.
\end{equation*}
Then we have
\begin{equation*}
\frac{\eta_0[{\bf x}]}{\prod_{k=1}^n(1+x_k^2)^{\frac{s_k+\frac{m}{n}}{2}}}
=\prod_{k=1}^n\left(\prod_{r=0}^{a_k-1}\widetilde{D}_{s_k-2r,t_k-r}^k\right)\frac{\varphi[{\bf x}_{\bf q}]}
{\prod_{k=1}^n(1+x_k^2)^{\frac{s_k-2a_k+\frac{m}{n}}{2}}}\,.
\end{equation*}
Since $\varphi\in C^{\infty}(K/K\cap M)$, $\eta_k\in C^{\infty}(K/K\cap M)$ for all $k=0,\cdots, n-1$.
We let $\gamma_0({\bf s}:{\bf t})=1$. Define 
\begin{align*}
&\gamma_k({\bf s}:{\bf t})=\prod_{r=1}^k\gamma_{a_r}(s_r:t_r)\textup{ for }k=1,\cdots,n-1\quad\textup{and}\\
&\E_{n-1}({\bf s},{\bf t},{\bf p},\varphi[{\bf x}_{\bf q}])\\
&=\sum_{k=0}^{n-1}\gamma_k({\bf s}:{\bf t})
\int_{(\R^+)^{n-1}}\prod_{j<k}\frac{x_j^{t_j-a_j}}{(1+x_j^2)^{\frac{s_j-2a_j+\frac{m}{n}}{2}}}
\prod_{j>k}\frac{x_j^{t_j}}{(1+x_j^2)^{\frac{s_j+\frac{m}{n}}{2}}}\cdot \E^{k,{a_k}}(s_k,t_k,{\bf p},\eta_k[{\bf x}])\prod_{j\neq k}dx_j\,.
\end{align*}
We use induction on $k$ (for $k=0,\cdots,n$) to get the following statement:
\begin{align*}
&\T_n \left({\bf s},{\bf t};{\bf p};\eta_0({\bf x})\right)\\
&=\gamma_k({\bf s}:{\bf t}) \int_{(\R^+)^n}\prod_{j\leqslant k}\frac{x_j^{t_j-a_j}}{(1+x_j^2)^{\frac{s_j-2a_j+\frac{m}{n}}{2}}}
\prod_{j>k}\frac{x_j^{t_j}}{(1+x_j^2)^{\frac{s_j+\frac{m}{n}}{2}}}\cdot\varepsilon({\bf x}_{{\bf p}})^d\cdot\eta_k({\bf x})\prod_{j=1}^ndx_j\\
&\quad +\sum_{r=0}^{k-1}\gamma_r({\bf s}:{\bf t}) \int_{(\R^+)^{n-1}}\prod_{j<r}\frac{x_j^{t_j-a_j}}{(1+x_j^2)^{\frac{s_j-2a_j+\frac{m}{n}}{2}}}
\prod_{j>r}\frac{x_j^{t_j}}{(1+x_j^2)^{\frac{s_j+\frac{m}{n}}{2}}}
\cdot \E^{r,{a_r}}(s_r,t_r,{\bf p},\eta_r[{\bf x}])\prod_{j\neq r}dx_j.
\end{align*}
Then the lemma below follows when $k=n$.
\begin{Lem}
For sufficiently large $\alpha$ and small $\beta$, we have
\begin{equation}\label{eq:FE_T_n}
\T_n \left({\bf s},{\bf t},{\bf p},\eta_0[{\bf x}]\right)
=\gamma_n({\bf s}:{\bf t})\T_n \left({\bf s}-2{\bf a},{\bf t}-{\bf a},{\bf p},\varphi[{\bf x}_{\bf q}]\right)
+\E_{n-1}({\bf s},{\bf t},{\bf p},\varphi[{\bf x}_{\bf q}])
\end{equation}
on $\alpha<\textup{Re}(t)<\textup{Re}(s)+\beta$.
In particular, if $d$ is even, then we have
\begin{equation*}
\T_n \left({\bf s},{\bf t},{\bf p},\eta_0[{\bf x}]\right)
=\gamma_n({\bf s}:{\bf t})\T_n \left({\bf s}-2{\bf a},{\bf t}-{\bf a},{\bf p},\varphi[{\bf x}_{\bf q}]\right)
\end{equation*}
\end{Lem}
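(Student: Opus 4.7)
The plan is to establish, by induction on $k$ for $k=0,1,\ldots,n$, the intermediate identity displayed in the excerpt (whose $k=n$ instance is the lemma). The base case $k=0$ is immediate: $\gamma_0({\bf s}:{\bf t})=1$, the products indexed by $j\leqslant 0$ are empty, the error sum is vacuous, and both sides of the identity reduce to $\T_n({\bf s},{\bf t},{\bf p},\eta_0[{\bf x}])$.

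For the inductive step $k\to k+1$, the idea is to isolate the one-dimensional $x_{k+1}$-integration inside the main term at step $k$ (after a Fubini interchange). By the defining recursion
\[
\frac{\eta_k[{\bf x}]}{(1+x_{k+1}^2)^{\frac{s_{k+1}+\frac{m}{n}}{2}}} = \left(\prod_{r=0}^{a_{k+1}-1}\widetilde{D}^{k+1}_{s_{k+1}-2r,\,t_{k+1}-r}\right)\frac{\eta_{k+1}[{\bf x}]}{(1+x_{k+1}^2)^{\frac{s_{k+1}-2a_{k+1}+\frac{m}{n}}{2}}},
\]
this inner integral fits the hypotheses of Lemma \ref{lem_operators} with $l=k+1$, $s=s_{k+1}$, $t=t_{k+1}$, $a=a_{k+1}$, and with $\eta_k,\eta_{k+1}$ playing the roles of $\psi_0$ and $\varphi$. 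Applying Lemma \ref{lem_operators} and using the multiplicative identity $\gamma_{k+1}({\bf s}:{\bf t})=\gamma_k({\bf s}:{\bf t})\,\gamma_{a_{k+1}}(s_{k+1}:t_{k+1})$ produces precisely the step-$(k+1)$ main integral together with one additional summand in the error sum, obtained by integrating $\E^{k+1,a_{k+1}}(s_{k+1},t_{k+1},{\bf p},\eta_{k+1})$ against the remaining reduced and original factors. At $k=n$, since $\eta_n=\varphi[{\bf x}_{\bf q}]$, the main integral equals $\gamma_n({\bf s}:{\bf t})\T_n({\bf s}-2{\bf a},{\bf t}-{\bf a},{\bf p},\varphi[{\bf x}_{\bf q}])$ and the accumulated boundary contributions total $\E_{n-1}({\bf s},{\bf t},{\bf p},\varphi[{\bf x}_{\bf q}])$. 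The second assertion, for even $d$, is automatic: in that case $\varepsilon({\bf x}_{\bf p})^d\equiv 1$ and each $E^l$, hence each $\E^{k,a_k}$, is zero by definition, so the error sum collapses.

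The principal technical obstacle is the convergence bookkeeping across the iteration. Each use of Lemma \ref{lem_operators} in the $x_{k+1}$ slot requires $a_{k+1}-1<\Re(t_{k+1})<\Re(s_{k+1})-(a_{k+1}-1)+\frac{m}{n}-1$ so that the boundary limits at $0$ and $\infty$ vanish, while the Fubini interchange needs absolute integrability of the full multi-variable integrand, which is controlled by bounds of the same type as in Lemma \ref{lem:z_conv}. After substituting $s_j=a_js+b_j$ and $t_j=a_jt+c_j$, intersecting these finitely many constraints for $j=1,\ldots,n$ and for each of the iterated shifts produces a common nonempty strip of the form $\alpha<\Re(t)<\Re(s)+\beta$ with $\alpha$ sufficiently large and $\beta$ sufficiently small, which is the region asserted in the lemma.
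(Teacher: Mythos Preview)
Your proposal is correct and follows precisely the paper's approach: the paper simply states the intermediate identity indexed by $k$ and asserts that it is proved by induction on $k$ (with the lemma being the case $k=n$), while you have supplied the details of the base case, the inductive step via Lemma~\ref{lem_operators} in the $x_{k+1}$-slot, and the convergence constraints that produce the strip $\alpha<\Re(t)<\Re(s)+\beta$. The observation that even $d$ kills every $E^l$ (hence every $\E^{k,a_k}$) is exactly how the paper obtains the second assertion.
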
   

\begin{proof}[Proof of Proposition \ref{prop:mero_cont_noncompact}]
The analyticity of the integral $\T_n$ is a standard application of Morera's Theorem as follows.
The continuity follows from Lebesgue dominated convergence Theorem.
For any simple closed curve $C\in \mathcal{D}_0$ in $t\in \C$ (resp. $s\in \C$) for fixed $s$ (resp. $t$),
the integral over $C$ of the integrand of $\T_n$ is $0$ by Cauchy's integral formula.
By Fubini's Theorem, we show the integral over $C$ of the integral $\T_n$ is $0$. 

We use the induction on $n$ to prove the meromorphic continuation part.
For $n=1$, $\E_{n-1}({\bf s},{\bf t},{\bf p},\varphi[{\bf x}_{\bf q}])=0$ because $\varepsilon(x)=1$.
This case can be proved by the equation (\ref{eq:FE_T_n}). 
We suppose that $\T_{n-1}({\bf s},{\bf t},{\bf p},\varphi[{\bf x}_{\bf q}])$ has a meromorphic continuation to all of $(s,t)$ in $\C^2$
for all choices of ${\bf s}, {\bf t}, {\bf p}, {\bf q}$, and $\varphi\in C^{\infty}(K/K\cap M)$ .
Then $\E_{n-1}({\bf s},{\bf t},{\bf p},\varphi[{\bf x}_{\bf q}])$ can be extended to a meromorphic function in $(s,t)\in \C^2$ 
because it is a finite sum of integrals of the form $\T_{n-1}$. 
We rewrite the equation (\ref{eq:FE_T_n}) of the following form:
\begin{equation}\label{eq:FE_T_n_rewrite}
\T_n \left({\bf s},{\bf t},{\bf p},\varphi[{\bf x}_{\bf q}]\right)
=\frac{\T_n \left({\bf s}+2{\bf a},{\bf t}+{\bf a},{\bf p},\eta_0[{\bf x}]\right)
-\E_{n-1}({\bf s}+2{\bf a},{\bf t}+{\bf a},{\bf p},\varphi[{\bf x}_{\bf q}])}
{\gamma_n({\bf s}+2{\bf a}:{\bf t}+{\bf a})}.
\end{equation} 
Then the right hand side of (\ref{eq:FE_T_n_rewrite}) can be defined on 
\begin{equation}\label{convergence_range_T_D_1}
\mathcal{D}_1=\left\{(s,t)\in \C^2 \,:\, 
\max_{j}\left\{\frac{-1-c_j}{a_j}\right\}-1<\textup{Re}(t)<\textup{Re}(s)+\min_{j}\left\{\frac{b_j+\frac{m}{n}-1-c_j}{a_j}\right\}+1\right\},
\end{equation}
which contains $\mathcal{D}_0$. Therefore, the left hand side of (\ref{eq:FE_T_n_rewrite}) can be extended to $\mathcal{D}_1$
as a meromorphic function in $(s, t)$.
We apply the equation  (\ref{eq:FE_T_n_rewrite}) repeatedly to extend the defining range of $\T_n$ to all of $(s, t)\in \C^2$ meromorphically.
\end{proof}

\begin{proof}[Proof of Theorem \ref{thm:mero_non}]
We can regard $\bar{\Z}(F_s,t)$ as a finite linear combination of integrals of the form (\ref{eq:z_polar_sign}), which is
\begin{equation}\label{eq:z_polar_sign_T}
\int_{K\cap L}\T_n\left(s{\bf 1},t{\bf 1}+{\bf c},{\bf x},(k^{-1}\cdot\varphi)[{\bf x}]\right)dk.
\end{equation}
Here, $K\cap L$ is compact.
Therefore, we may apply an analogous argument with the analytic part of the proof of Proposition \ref{prop:mero_cont_noncompact}.
Furthermore, the meromorphic part follows from the fact that 
the integrand of (\ref{eq:z_polar_sign_T}) has a meromorphic continuation to all of $(s,t)$ in $\C^2$.
\end{proof}

\subsection{Proof of Theorem \ref{thm:extended_F_E}}
We begin with the definition of $\nu(M,N)(F)$ on $C^N(\bar{\fn})$: 
\begin{equation*}
\nu(M,N)(F)=\sup_{X\in \bar{\fn}}\left\{ (1+\|X\|^2)^M \cdot \sum_{\alpha,|\alpha|\leqslant N} |\partial^{\alpha}F(X)| \right\}\,.
\end{equation*}
It is known that the functional equation (\ref{eq:FT_S}) still holds for functions satisfying certain decay condition.
Precisely, we have the following proposition.
\begin{Prop}\textup{\cite[Lemma 4.34]{Kimura03}}
Suppose $F\in C^{\infty}(\bar{\fn})$ satisfies $\nu(M_0+1,M_0)(F)<\infty$ for sufficiently large $M_0$.
Then the functional equation (\ref{eq:FT_S}) holds as meromorphic functions in $t$:
\begin{equation*}
\frac{\pi^{\frac{nt}{2}}}{\Gamma_n(t)}\bar{\Z}(F,t-\frac{m}{n})
=\frac{\pi^{\frac{n}{2}(-t+\frac{m}{n})}}{\Gamma_n(-t+\frac{m}{n})}\Z(\widehat{F},-t).
\end{equation*}
\end{Prop}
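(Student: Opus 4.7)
The plan is to reduce the statement to the Schwartz case, for which the functional equation (\ref{eq:FT_S}) is already known, via a cutoff approximation together with dominated convergence. The polynomial decay and derivative bounds packaged in $\nu(M_0+1,M_0)(F)<\infty$ are exactly what is needed to supply uniform bounds as the cutoff is removed, both on $F$ itself and on its Fourier transform.

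First I would identify a nonempty open region $\Omega\subset\C$ of $t$-values in which both $\bar{\Z}(F,t-\tfrac{m}{n})$ and $\Z(\widehat{F},-t)$ converge absolutely as iterated integrals. On the $\bar{\Z}$ side the hypothesis supplies $|F(X)|\le C(1+\|X\|^2)^{-(M_0+1)}$, which together with the local $L^1$-property of $\bar{\nabla}^{\,\tau}$ for $\Re(\tau)>-(e+1)$ used throughout Section $4$ gives absolute convergence on a half-plane whose right boundary tends to $+\infty$ with $M_0$. On the $\Z$ side, $M_0$-fold integration by parts in the defining Fourier integral converts the derivative bounds in the hypothesis into the decay estimate $|\widehat{F}(Y)|\le C(1+\|Y\|)^{-M_0}$, which yields absolute convergence of $\Z(\widehat{F},-t)$ on another strip. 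Choosing $M_0$ sufficiently large makes these two regions overlap in a nonempty open set $\Omega$.

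Next, I would pick $\psi\in\mathcal{S}(\bar{\fn})$ with $\psi(0)=1$ and set $F_{\epsilon}(X)=\psi(\epsilon X)F(X)$. Each $F_{\epsilon}$ is Schwartz, so the functional equation (\ref{eq:FT_S}) holds for $F_{\epsilon}$. The task is then to show
\begin{equation*}
\bar{\Z}(F_{\epsilon},t-\tfrac{m}{n})\longrightarrow\bar{\Z}(F,t-\tfrac{m}{n})
\quad\text{and}\quad
\Z(\widehat{F_{\epsilon}},-t)\longrightarrow\Z(\widehat{F},-t)
\end{equation*}
as $\epsilon\downarrow 0$ for $t\in\Omega$. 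The first limit is immediate by dominated convergence, with dominant $\|\psi\|_{\infty}|F(X)|\,\bar{\nabla}(X)^{\Re(t-m/n)}$. For the second, Leibniz's rule yields the $\epsilon$-uniform bound $|\partial^{\alpha}F_{\epsilon}(X)|\le C_{\alpha}(1+\|X\|^2)^{-(M_0+1)}$ for $|\alpha|\le M_0$ and $0<\epsilon\le 1$ (the $\epsilon$-factors multiplying derivatives of $\psi$ help rather than hurt), and then $M_0$-fold integration by parts in the Fourier transform produces the uniform estimate $|\widehat{F_{\epsilon}}(Y)|\le C(1+\|Y\|)^{-M_0}$; combined with the pointwise convergence $\widehat{F_{\epsilon}}\to\widehat{F}$ (by dominated convergence in the defining Fourier integral), a second application of dominated convergence gives the claim.

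Once the functional equation has been established as an equality of absolutely convergent integrals on $\Omega$, I would invoke meromorphic continuation in $t$ of both sides, which is available by the Bernstein $b$-function argument recalled just before Lemma \ref{lem:mero_t} for $\bar{\Z}(F,\cdot)$ and analogously for $\Z(\widehat{F},\cdot)$, to upgrade the identity to an equality of meromorphic functions on all of $\C$. The main obstacle will be the uniform-in-$\epsilon$ control of $\widehat{F_{\epsilon}}$ in the second limit; this is exactly what forces the derivative portion of the hypothesis $\nu(M_0+1,M_0)(F)<\infty$ rather than mere pointwise decay of $F$, together with the book-keeping needed to verify that $\Omega$ can indeed be arranged to be nonempty for $M_0$ large.
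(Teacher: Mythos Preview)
The paper does not supply its own proof of this proposition; it is quoted verbatim from \cite[Lemma 4.34]{Kimura03} and used as a black box in deriving Corollary \ref{cor:Kimura}. So there is no in-paper argument to compare against.

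That said, your proposed route---approximate $F$ by the Schwartz functions $F_\epsilon(X)=\psi(\epsilon X)F(X)$, invoke (\ref{eq:FT_S}) for each $F_\epsilon$, and pass to the limit on both sides by dominated convergence---is precisely the method Kimura uses for this lemma. The two key estimates you isolate (polynomial decay of $F$ controlling the $\bar{\Z}$ side, and derivative bounds on $F$ yielding uniform-in-$\epsilon$ decay of $\widehat{F_\epsilon}$ via integration by parts to control the $\Z$ side) are the correct ones, and your observation that the derivative portion of $\nu(M_0+1,M_0)$ is what drives the uniform Fourier estimate is exactly the point of the hypothesis. One small caveat: the Bernstein $b$-function step at the end only extends meromorphically over a strip whose width is governed by $M_0$ (since integration by parts requires decay of the derivatives up to order $2k$, and you only control $M_0$ of them), so the conclusion is an identity of meromorphic functions on a region that grows with $M_0$ rather than instantly on all of $\C$; this is consistent with the phrase ``for sufficiently large $M_0$'' in the statement and is how Kimura formulates it.
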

Note also that functions in $I(s)$ have a decay condition for sufficiently large $\textup{Re}(s)$ by Lemma \ref{lem_F_s_phi}.

\begin{Lem}\label{lem:nu_fourier}
Let $\textup{Re}(s)\geqslant 2M-\frac{m}{n}$.
If $F_s\in I(s)$, then $\nu(M,N)(F_s)<\infty$ for all $N\geqslant 0$.
\end{Lem}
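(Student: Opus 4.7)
The plan is to prove the base case $N=0$ by direct estimate using Lemma~\ref{lem_F_s_phi}, and then to reduce the general case $N\geq 1$ to the base case via the observation that $I(s)$ is stable under partial differentiation on $\bar{\fn}$.

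For the base case $N=0$, note that by Proposition~\ref{prop_polar_coord}, any $X$ in the dense open orbit $\mathcal{O}_n\subset\bar{\fn}$ may be written as $X=k\cdot\sum_{j=1}^n x_jF_j$ with $k\in K\cap L$. Lemma~\ref{lem_F_s_phi} then gives
\begin{equation*}
|F_s(X)|\leq \frac{C_\varphi}{\prod_{j=1}^n(1+x_j^2)^{(\textup{Re}(s)+m/n)/2}},
\end{equation*}
where $C_\varphi$ is a uniform bound for the smooth function $\varphi$ on the compact space $K/K\cap M$ (and $K\cap L$ is compact). Since the inner product on $\bar{\fn}$ is $K$-invariant and its restriction to the finite-dimensional subspace $\textup{span}\{F_1,\dots,F_n\}$ is positive definite, one has $\|X\|^2=\|\sum_j x_jF_j\|^2\leq C'\sum_{j=1}^n x_j^2$ for some $C'>0$. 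Combined with the elementary inequality $1+\sum_j x_j^2\leq\prod_j(1+x_j^2)$, this yields
\begin{equation*}
(1+\|X\|^2)^M|F_s(X)|\leq C\prod_{j=1}^n(1+x_j^2)^{M-(\textup{Re}(s)+m/n)/2},
\end{equation*}
which is uniformly bounded on $\mathcal{O}_n$ whenever $\textup{Re}(s)\geq 2M-\frac{m}{n}$. Continuity of $F_s$ and density of $\mathcal{O}_n$ then extend the bound to all of $\bar{\fn}$, so $\nu(M,0)(F_s)<\infty$.

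For $N\geq 1$, as noted in Section~4 the action of $Y\in\bar{\fn}$ on $F_s\in I(s)$ by $(\bar{n}_Y\cdot F_s)(X)=F_s(X-Y)$ differentiates to the negative directional derivative $-D_YF_s$, so every partial derivative $\partial^\alpha F_s$ is obtained by applying a suitable element of $\mathcal{U}(\bar{\fn})\subset\mathcal{U}(\fg)$ to $F_s$; since $I(s)$ is stable under $\mathcal{U}(\fg)$, each $\partial^\alpha F_s$ again lies in $I(s)$. Applying the $N=0$ bound (with the same $M$) to each of the finitely many derivatives $\partial^\alpha F_s$ with $|\alpha|\leq N$ then produces $\nu(M,N)(F_s)<\infty$.

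The only step requiring any genuine verification is the comparison $\|X\|^2\leq C'\sum_j x_j^2$, which is immediate from positive definiteness of $-\frac{n}{4m}B(\cdot,\theta(\cdot))$ restricted to $\textup{span}\{F_1,\ldots,F_n\}$; everything else is bookkeeping with the explicit radial formula for $F_s$ and the closure of $I(s)$ under the enveloping algebra action.
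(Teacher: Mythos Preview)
Your proof is correct and follows essentially the same approach as the paper's: both use the polar-coordinate expression from Lemma~\ref{lem_F_s_phi} to bound $|F_s(X)|$, compare $\|X\|^2$ to $\sum_j x_j^2$, and invoke the stability of $I(s)$ under $\mathcal{U}(\fg)$ to handle derivatives. The only cosmetic difference is that the paper computes $\|X\|^2=\sum_j x_j^2$ exactly via the Killing form (the $F_j$ turn out to be orthonormal for the chosen normalization), whereas you use the softer bound $\|X\|^2\leq C'\sum_j x_j^2$; either suffices for the conclusion.
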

\begin{proof}
For $X\in \bar{\fn}$, we write $X=m\cdot\sum_{j=1}^nx_jF_j$, $m\in M$.
Then we have
\begin{align*}
\| X\|^2&=\left\langle \sum_{j=1}^nx_jF_j, \sum_{j=1}^nx_jF_j \right\rangle\\
&=\frac{n}{4m}\Tr\left(\ad\left(\sum_{j=1}^nx_jF_j\right)\ad\left(\sum_{j=1}^nx_jE_j\right)\right)\\
&=\frac{n}{4m}\cdot 4(d(n-1)+(e+1))\sum_{j=1}^nx_j^2\\
&=\sum_{j=1}^nx_j^2.
\end{align*}

On the other hand, since each $\partial^{\alpha}$, $|\alpha|=1$, acts as an element in the enveloping algebra $\mathcal{U}(\fg)$, so $\partial^{\alpha}(F_s)\in I(s)$ for any multi-index $\alpha$. 
Then $\partial^{\alpha}(F_s)(X)$ is of the form $\frac{\varphi(\kappa(\sum_{j=1}^nx_jF_j))}{\prod_{j=1}^n(1+x_j^2)^{\frac{s+\frac{m}{n}}{2}}}$ for some $\varphi\in C^{\infty}(K/M)$. 

Therefore, for any nonnegative integer $N$, we have
\begin{align*}
\nu(M,N)(F_s)
&=\sup_X\left\{\left(1+\|X\|^2 \right)^{M}\cdot\sum_{\alpha,|\alpha|\leqslant N}\left|\partial^{\alpha}F_s\right| \right\}\\
&\leqslant c\sup_{(\R^+)^n}\left\{\frac{\left(1+\sum_{j=1}^nx_j^2\right)^M}{\prod_{j=1}^n(1+x_j^2)^{\frac{\Re(s)+\frac{m}{n}}{2}}}\right\},
\end{align*}
which is finite because $M\leqslant \frac{\Re(s)+\frac{m}{n}}{2}$. 
\end{proof}

\begin{Cor}\label{cor:Kimura}
For sufficiently large $\Re(s)$, the functional equation
\begin{equation}\label{eqn:F_E_I(s)}
\frac{\pi^{\frac{nt}{2}}}{\Gamma_n(t)}\bar{\Z}(F_s,t-\frac{m}{n})
=\frac{\pi^{\frac{n}{2}(-t+\frac{m}{n})}}{\Gamma_n(-t+\frac{m}{n})}\Z(\widehat{F_s},-t).
\end{equation}
holds for functions $F_s$ in $I(s)$ as meromorphic functions in $t\in \C$.
\end{Cor}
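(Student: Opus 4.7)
The plan is to deduce this corollary directly from the preceding Proposition (Kimura's Lemma 4.34), using Lemma \ref{lem:nu_fourier} as the only real bridge.

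First, I would fix an integer $M_0$ large enough that the Proposition's hypothesis reduces to $\nu(M_0+1, M_0)(F) < \infty$ on the input function $F$. By Lemma \ref{lem:nu_fourier}, this condition is satisfied by $F_s$ itself as soon as $\Re(s) \geq 2(M_0+1) - m/n$. For such $s$ the cited Proposition applies verbatim to $F = F_s$ and produces the functional equation (\ref{eqn:F_E_I(s)}) as an identity of meromorphic functions of $t \in \C$. Meromorphy of each side individually has already been recorded: the left-hand side is Lemma \ref{lem:mero_t}, and the right-hand side follows from the standard Bernstein $b$-function argument on $\fn$ applied to $\widehat{F_s}$, which is smooth and rapidly decreasing once $\Re(s)$ is large, by Lemma \ref{lem:nu_fourier} applied for sufficiently many $M$ and $N$ together with the standard properties of the Fourier transform.

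The only thing to watch is the bookkeeping of how large $\Re(s)$ must be: one needs $F_s$ to decay polynomially together with enough derivatives, and simultaneously $\widehat{F_s}$ to decay well enough that $\Z(\widehat{F_s},-t)$ converges absolutely on some strip in $t$. Both demands are uniform consequences of Lemma \ref{lem:nu_fourier} once $\Re(s)$ exceeds a threshold depending only on $M_0$ and $m/n$. I do not expect any genuine obstacle here; the corollary is essentially an immediate specialization of the cited proposition from \cite{Kimura03}, and the real work (extending the equation to all $(s,t)\in \C^2$ via meromorphic continuation in $s$) belongs to the proof of Theorem \ref{thm:extended_F_E} rather than to this corollary.
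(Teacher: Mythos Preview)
Your proposal is correct and matches the paper's approach: the corollary is stated immediately after the Proposition (Kimura's Lemma 4.34) and Lemma \ref{lem:nu_fourier}, with no explicit proof given, precisely because it follows by choosing $\Re(s)\geq 2(M_0+1)-\frac{m}{n}$ so that Lemma \ref{lem:nu_fourier} feeds the hypothesis of the Proposition. Your additional remarks on the meromorphy of each side separately are not strictly needed, since the cited Proposition already asserts the equation as an identity of meromorphic functions in $t$, but they do no harm.
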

We combine Theorem \ref{thm:mero_non} and Corollary \ref{cor:Kimura} to conclude Theorem \ref{thm:extended_F_E}.

\section{Applications to the representation theory}
The standard intertwining map $\widetilde{B}_s: \Ind_P^G(s)\longrightarrow \Ind_P^G(-s)$ is defined by
\begin{equation*}
\widetilde{B}_s(f)(X)=\int_{\bar{\fn}} f(\bar{n}_Xw \bar{n}_{X_1})dX_1
\end{equation*} 
on which the integral converges. See, for example, \cite[pg 174]{Knapp86}.
We also define $\widetilde{A}_s$ on $I(s)$ by
\begin{align*}
\widetilde{A}_s(F_s)(X)&=\bar{\Z}\left(\tau_XF_s, s-\frac{m}{n}\right)\\
&=\int_{\bar{\fn}} F_s(X_1)\bar{\nabla}(X_1-X)^{s-\frac{m}{n}} dX_1\,,
\end{align*}
where $\tau_XF_s=F_s(\cdot+X)$.
Then the integral $\widetilde{A}_s(F_s)(X)$ 
converges absolutely for $\Re(s)>d(n-1)$ 
by Lemma \ref{lem:z_conv} and it can be derived from the integral $\widetilde{B}_s(f)(X)$. 

\begin{Lem}\textup{(\cite[pg 183]{Knapp86})}\label{lem:As_Bs}
Let $F_s\in I(s)$ be the function corresponding to $f\in \textup{Ind}_P^G(s)$. Then 
\begin{equation*}
\widetilde{A}_s(F_s)=\widetilde{B}_s(f)\quad \textup{for  } \Re(s)>d(n-1).
\end{equation*}
\end{Lem}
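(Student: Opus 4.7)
My plan is to compute $\widetilde{B}_s(f)(X)$ directly via the Bruhat decomposition of $w\bar{n}_{X_1}$, and then identify the result with the integrand of $\widetilde{A}_s(F_s)(X)$ after a change of variables on $\bar{\fn}$. On the dense open set $\{\bar{\nabla}(X_1)\neq 0\}$ we may write
\[
w\,\bar{n}_{X_1} \;=\; \bar{n}_{\phi(X_1)}\,\ell(X_1)\,n_1(X_1)
\]
for smooth maps $\phi,\,\ell,\,n_1$; by the very definition of $\bar{\nabla}$ in Section~2, the $A$-component of $\ell(X_1)$ satisfies $e^{\Lambda_0(\log a(\ell(X_1)))}=\bar{\nabla}(X_1)$. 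Since $\bar{N}$ is abelian, $\bar{n}_X\,\bar{n}_{\phi(X_1)}=\bar{n}_{X+\phi(X_1)}$, and the transformation law for $f\in\Ind_P^G(s)$ gives
\[
f\bigl(\bar{n}_X\,w\,\bar{n}_{X_1}\bigr)\;=\;\bar{\nabla}(X_1)^{-(s+m/n)}\,F_s\bigl(X+\phi(X_1)\bigr).
\]

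After the substitution $X_1':=X+\phi(X_1)$, the equality $\widetilde{B}_s(f)(X)=\widetilde{A}_s(F_s)(X)$ reduces to the two Jordan-algebraic identities
\[
\bar{\nabla}\bigl(\phi(X_1)\bigr)\;=\;\bar{\nabla}(X_1)^{-1},\qquad dX_1\;=\;\bar{\nabla}(X_1'-X)^{-2m/n}\,dX_1',
\]
from which $\bar{\nabla}(X_1)^{-(s+m/n)}\,dX_1 = \bar{\nabla}(X_1'-X)^{s-m/n}\,dX_1'$ follows by a direct exponent count.

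I would verify both identities by restricting to the radial slice provided by Proposition~\ref{prop_polar_coord}. Because $w$ sits inside $\prod_j SL(2,\R)_j$, the Bruhat decomposition of $w\bar{n}_{X_1}$ at $X_1=\sum_j x_jF_j$ splits into $n$ independent copies of the one-variable computation already carried out in Section~2, giving $\phi\bigl(\sum_j x_jF_j\bigr) = \sum_j(-1/x_j)F_j$. Lemma~\ref{lem_nabla} then yields $\bar{\nabla}(\phi(X_1)) = \prod_j|x_j|^{-1} = \bar{\nabla}(X_1)^{-1}$, and comparing the polar-coordinate densities $\prod_j x_j^e\prod_{i<j}(x_i^2-x_j^2)^d\prod_j dx_j$ for $dX_1$ and $dX_1'$ under $x_j\mapsto -1/x_j$ produces the Jacobian factor $\prod_j|x_j|^{2(e+d(n-1)+1)} = \bar{\nabla}(X_1)^{2m/n}$, which by the first identity coincides with $\bar{\nabla}(X_1'-X)^{-2m/n}$. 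The extension from this slice to the full open orbit uses $(K\cap L)$-equivariance of $\phi$, rooted in Lemma~\ref{lem_kappa}; verifying that conjugation by $w$ preserves $K\cap L$ and respects $\bar{\nabla}$ is the main obstacle, as it is the only step that is not a direct one-variable calculation in the $SL(2,\R)_j$ factors. Absolute convergence throughout is guaranteed by Lemma~\ref{lem:z_conv} applied at $t=s-m/n$, which yields precisely the stated range $\Re(s)>d(n-1)$.
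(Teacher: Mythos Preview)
The paper gives no proof of this lemma; it simply cites Knapp. Your overall strategy---Bruhat-decompose $w\bar n_{X_1}$, read off the $A$-component as $\bar\nabla(X_1)$, then change variables $X_1'=X+\phi(X_1)$---is exactly the standard argument Knapp carries out, and the two identities you isolate are the correct ones.

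The only place your route diverges is in the Jacobian computation. The polar-coordinate approach is workable but more delicate than you indicate: the equivariance is twisted, namely $\phi(k\cdot X_1)=(wkw^{-1})\cdot\phi(X_1)$ rather than $k\cdot\phi(X_1)$, and in addition the map $x_j\mapsto -1/x_j$ leaves the chamber $\Omega$, so before comparing the densities $\prod_j x_j^e\prod_{i<j}(x_i^2-x_j^2)^d$ one must produce an element of $K\cap L$ carrying $\sum_j(-1/x_j)F_j$ back to positive radial coordinates. Such an element exists by the completeness statement preceding Proposition~\ref{prop_polar_coord}, but it is not supplied by the $SL(2,\R)_j$ factors themselves, so the ``one-variable'' philosophy does not quite close this step. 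Knapp avoids all of this with the general change-of-variables formula for the $\bar N MAN$ decomposition: the map $X_1\mapsto \log\bar N(g\bar n_{X_1})$ has Jacobian $e^{-2\rho(\log a(g\bar n_{X_1}))}$, and taking $g=w$ with $\rho=\tfrac{m}{n}\Lambda_0$ yields $|J\phi(X_1)|=\bar\nabla(X_1)^{-2m/n}$ in one stroke. Your approach has the virtue of staying within the explicit tools of Sections~2--3; the general formula is shorter and sidesteps precisely the equivariance and sign bookkeeping you flagged as the main obstacle.
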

\noindent Therefore, $\widetilde{A}_s$ is a $G$-intertwining operator from $I(s)$ to $I(-s)$ for $\Re(s)>d(n-1)$.  
Moreover, Lemma \ref{lem:intertwining} below is a well-known fact 
(see, for example, \cite{Knapp86}, \cite{VoganWallach90}) and we give an alternative proof.

\begin{Lem}\label{lem:intertwining}    
The intertwining operators $\widetilde{A}_s$ are complex analytic in $s$ for $\Re(s)>d(n-1)$ and 
have meromorphic continuations to all of $\C$
\end{Lem}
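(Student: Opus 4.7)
The plan is to reduce this lemma to Theorem \ref{thm:mero_non} applied to the translated family $\tau_X F_s$. The starting identity is
\[
\widetilde{A}_s(F_s)(X) = \bar{\Z}\bigl(\tau_X F_s,\, s - \tfrac{m}{n}\bigr),
\]
built into the definition of $\widetilde{A}_s$. Since $\tau_X F_s = \bar{n}_{-X} \cdot F_s$ under the group action on the noncompact picture, $\tau_X F_s$ again lies in $I(s)$ for every fixed $X$, so it is a legitimate input to Theorem \ref{thm:mero_non}.

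First I would verify analyticity on $\Re(s) > d(n-1)$ by appealing directly to Lemma \ref{lem:z_conv}. Substituting $t = s - \tfrac{m}{n}$ into the convergence range $-(e+1) < \Re(t) < \Re(s) - d(n-1)$, and using the identity $\tfrac{m}{n} = d(n-1) + (e+1)$ from the end of Section 2, both inequalities collapse to the single condition $\Re(s) > d(n-1)$. On this half-plane the defining integral for $\widetilde{A}_s(F_s)(X)$ converges absolutely, and a standard application of Morera's and Fubini's theorems (in the spirit of the first paragraph of the proof of Proposition \ref{prop:mero_cont_noncompact}) promotes the pointwise integral to an analytic function of $s$.

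For the meromorphic continuation, I would apply Theorem \ref{thm:mero_non} to the family $s \mapsto \tau_X F_s$, which is itself a holomorphic family of elements of $I(s)$: the compact-picture datum $\varphi$ is fixed, and the way the $\bar{N}$-action converts into the compact picture depends on $s$ only through the spherical function $h_s$ of Lemma \ref{lem_F_s_phi}, which is entire in $s$. Theorem \ref{thm:mero_non} then gives a meromorphic continuation of $(s,t) \mapsto \bar{\Z}(\tau_X F_s, t)$ to all of $\C^2$. Restricting to the complex line $t = s - \tfrac{m}{n}$ yields a meromorphic function of $s \in \C$; the open set of convergence $\Re(s) > d(n-1)$ on this line ensures the line is not contained in the pole locus, so the restriction is genuinely meromorphic.

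The main obstacle I anticipate is bookkeeping rather than analysis: one must confirm that the family $s \mapsto \tau_X F_s$ really does satisfy the holomorphic-in-$s$ hypothesis built implicitly into Theorem \ref{thm:mero_non}, since the naive compact-picture data for a translated function depends on $s$ through $h_s(\,\cdot\, + X)/h_s(\,\cdot\,)$. Verifying this reduces to the explicit formula in Lemma \ref{lem_F_s_phi} and entireness of $s \mapsto (1+x_j^2)^{-(s+m/n)/2}$, and once this is in hand the rest of the argument is a clean application of Theorem \ref{thm:mero_non}.
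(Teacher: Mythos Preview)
Your proposal is correct and follows essentially the same approach as the paper: set $t = s - \tfrac{m}{n}$ in Theorem~\ref{thm:mero_non} and read off the meromorphic continuation, with the convergence range from Lemma~\ref{lem:z_conv} collapsing to $\Re(s) > d(n-1)$ via $\tfrac{m}{n} = d(n-1) + (e+1)$. The paper's own proof is a single sentence to this effect; your version is more thorough in justifying the analyticity half and in checking that $\tau_X F_s$ remains a legitimate holomorphic-in-$s$ input, which the paper leaves implicit.
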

\begin{proof}
The meromorphic continuation part follows by setting $t=s-\frac{m}{n}$ in Theorem \ref{thm:mero_non}.
\end{proof}

We now consider the case $G=GL(2n,\R)$ and a standard hermitian form on $I(s)$.
We define $\langle \,\,,\, \rangle_{s,t}$ on $I(s)$ by
\begin{equation}\label{def:form_st}
\langle F_s,G_s\rangle_{s,t}
=\frac{\pi^{\frac{n(t+4n)}{2}}}{\Gamma_n(t+4n)}\int_{\bar{\fn}}F_s(X)
\overline{\bar{\Z}(\tau_XG_{\bar{s}},\bar{t}-n)}dX\,,
\end{equation} 
which is a $G$-invariant hermitian form if $s=t\in \R$. See \cite[Proposition 14.23]{Knapp86}, for example.
For $\Re(s)>n-1$, we also define a $L^2$-norm on $V_s=\left\{\widehat{F_s}: F_s\in I(s)\right\}$ by
\begin{equation*}
\langle \widehat{F_s},\widehat{G_s} \rangle_{L^2\left(\nabla^{-t}\right)}
=\int_{\fn}\widehat{F_s}(Y)\overline{\widehat{G_{\bar{s}}}(Y)}\nabla(Y)^{-t}dY\,.
\end{equation*}
As an application of Theorem \ref{thm:extended_F_E}, we prove the following proposition.

\begin{Prop}\label{prop:herm_mero}
As meromorphic functions in $s$ and $t$ on $\Omega_1\cup\Omega_2$,
\begin{equation*}
\langle F_s,G_s\rangle_{s,t}=
\frac{\pi^{\frac{n(-t-3n)}{2}+4n}16^n}{\Gamma_n(-t-3n)\cdot b_{2n}(t+3n)}
\langle \widehat{F_s},\widehat{G_s} \rangle_{L^2\left(\nabla^{-t}\right)}\,,
\end{equation*}
where
\begin{align*}
\Omega_1&=\bigcup_{k\in \Z_{\geq 0}}\left\{(s,t)\in \C^2: n-2k<\Re(t)<\Re(s)+1-2k\right\}
\textup{ and }\\
\Omega_2&=\left\{(s,t)\in \C^2: \Re(s)>n-1\textup{ and }\Re(t)<1\right\}\,.
\end{align*}
\end{Prop}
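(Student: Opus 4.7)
The plan is to imitate the short $SL(2,\R)$ calculation displayed in the introduction: substitute the functional equation \eqref{eq:FT_non} into the inner $\bar{\Z}$ appearing in the definition of $\langle F_s, G_s\rangle_{s,t}$, interchange the order of integration by Fubini, and recognize the resulting expression as a pairing of Fourier transforms. For $G = GL(2n,\R)$ one has $m/n = n$, so applying Theorem \ref{thm:extended_F_E} at parameter $\bar{t}$ to $\tau_X G_{\bar{s}} \in I(\bar{s})$ gives, as meromorphic functions,
\[
\bar{\Z}(\tau_X G_{\bar{s}}, \bar{t}-n) = \frac{\pi^{n(n-\bar{t})/2}\,\Gamma_n(\bar{t})}{\pi^{n\bar{t}/2}\,\Gamma_n(n-\bar{t})}\,\Z\bigl(\widehat{\tau_X G_{\bar{s}}}, -\bar{t}\bigr).
\]
Combining this with the translation identity $\widehat{\tau_X G_{\bar{s}}}(Y) = e^{2\pi i\langle X,Y\rangle}\widehat{G_{\bar{s}}}(Y)$ and taking complex conjugates,
\[
\overline{\bar{\Z}(\tau_X G_{\bar{s}}, \bar{t}-n)} = \frac{\pi^{n(n-t)/2}\,\Gamma_n(t)}{\pi^{nt/2}\,\Gamma_n(n-t)}\int_{\fn} e^{-2\pi i\langle X,Y\rangle}\,\overline{\widehat{G_{\bar{s}}}(Y)}\,\nabla(Y)^{-t}\,dY.
\]
Substituting into \eqref{def:form_st}, swapping integration via Fubini, and recognizing the inner $X$-integral as $\widehat{F_s}(Y)$ produces
\[
\langle F_s, G_s\rangle_{s,t} = \frac{\pi^{n(t+4n)/2}}{\Gamma_n(t+4n)}\cdot\frac{\pi^{n(n-t)/2}\,\Gamma_n(t)}{\pi^{nt/2}\,\Gamma_n(n-t)}\,\langle \widehat{F_s}, \widehat{G_s}\rangle_{L^2(\nabla^{-t})}.
\]

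The remaining task is to rewrite the prefactor as the one in the statement. This is an algebraic identity for the $b$-function of the prehomogeneous vector space $(L,\Ad,\bar{\fn})$: iterating the Capelli-type formula $\det(\partial_X)\det(X)^\mu = \mu(\mu+1)\cdots(\mu+n-1)\det(X)^{\mu-1}$ twice yields $b(t) = \prod_{j=0}^{n-1}(t+j)(t+j-1)$, and a direct computation using the shift formula for $\Gamma(z+2n)/\Gamma(z)$ together with the definition $\Gamma_n(z) = \prod_{j=0}^{n-1}\Gamma((z-j)/2)$ (with $d=1$) converts the Gamma quotient above into $16^n/[\Gamma_n(-t-3n)\,b_{2n}(t+3n)]$, matching the claim.

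To make the preceding computation literal I would first work inside an open subset of $\Omega_2$ in which every integral is absolutely convergent. The condition $\Re(s) > n-1$ makes $F_s, G_s \in L^1(\bar{\fn})$ by the corollary to Lemma \ref{lem:z_conv}, so $\widehat{F_s}, \widehat{G_s}$ are bounded continuous; the condition $\Re(t) < 1$ combined with $e+1 = 1$ ensures that $\nabla(Y)^{-t}$ is locally integrable; and smoothness of $G_{\bar{s}}$ provides enough polynomial decay of $\widehat{G_{\bar{s}}}$ to justify Fubini. Once the identity holds on such a subregion, both sides extend meromorphically: the left side by Theorem \ref{thm:mero_non}, and the right side by the analogous continuation of $\Z(\widehat{F_s}\cdot\overline{\widehat{G_s}}, -t)$ via the $b$-function identity \eqref{eq:b_k}; the factor $b_{2n}(t+3n)$ in the denominator is precisely what appears when one iterates \eqref{eq:b_k} to push $t$ through the hyperplane $\Re(t) = 1$ into $\Omega_1$.

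The principal obstacle is bookkeeping rather than conceptual: one must (i) verify the algebraic $\Gamma_n$/$b_{2n}$ identity matching the two prefactors, and (ii) pin down a nonempty open subregion of $\Omega_2$ in which Fubini is literally applicable. The core calculation is a direct multivariable analogue of the $SL(2,\R)$ computation reproduced in the introduction.
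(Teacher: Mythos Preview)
Your overall strategy---apply the functional equation, Fubini, recognize the Fourier pairing---is the right one, and it is what the paper does. But there is a genuine gap in your justification of Fubini that the paper has to work around.

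You propose to find an open subset of $\Omega_2$ on which every integral converges absolutely. No such subset exists when $n\ge 2$ (and the table requires $n\ge 2$ for $GL(2n,\R)$). The inner integral $\bar{\Z}(\tau_X G_{\bar s},\bar t-n)$ converges only for $-(e+1)<\Re(t)-n$, i.e.\ $\Re(t)>n-1\ge 1$, while $\Omega_2$ forces $\Re(t)<1$. So on $\Omega_2$ the expression $\bar{\Z}(\tau_X G_{\bar s},\bar t-n)$ is \emph{only} defined as a meromorphic continuation, and you are tacitly assuming that the meromorphic continuation of $\int_{\bar\fn}F_s(X)\,\overline{\bar\Z(\tau_X G_{\bar s},\bar t-n)}\,dX$ equals $\int_{\bar\fn}F_s(X)\,\overline{(\text{continuation of }\bar\Z)}\,dX$. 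That interchange of continuation and integration is exactly what has to be proved; invoking Fubini on divergent integrals does not prove it.

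The paper's remedy is precisely the $b$-function step you relegated to ``bookkeeping.'' Before applying the functional equation, the paper uses \eqref{eq:b_k} with $k=2n$ to replace $\bar\Z(\tau_X G_{\bar s},\bar t-n)$ by $b_{2n}(t+3n)^{-1}\,\bar\Z\bigl(\bar\nabla(\partial_X)^{4n}\tau_X G_{\bar s},\,\bar t+3n\bigr)$. After this shift the inner integral converges on $-3n<\Re(t)<\Re(s)+1-4n$, a region that (for $\Re(s)$ large) genuinely overlaps $\Omega_2$. On that overlap the functional equation relates two honestly convergent integrals, Fubini is legitimate (this is the content of Lemma~\ref{lem:L^2norm_dmu}), and the identity \eqref{eq:Diff_Poly} undoes the differential operator on the Fourier side, producing the factor $(2\pi i)^{4n}\nabla(Y)^{4n}$ which shifts $-t-4n$ back to $-t$. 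The $b_{2n}(t+3n)$ in the final formula is therefore not a post-hoc algebraic rewriting of a Gamma quotient---it is the footprint of the shift that makes the argument work at all.
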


We need some lemmas to prove Proposition \ref{prop:herm_mero}.
\begin{Lem}\label{lem:form_st}
$\langle \,\,,\, \rangle_{s,t}$ is complex analytic on $\{(s,t)\in \C^2:n<\Re(t)<\Re(s)+1\}$ 
and it can be extended to a meromorphic function on $\Omega_1$
\end{Lem}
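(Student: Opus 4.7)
The plan is to mirror the proof of Lemma \ref{lem:mero_t}, now applied to the double integral defining the form. As a preliminary simplification, setting $H = \overline{G_{\bar{s}}} \in I(s)$ and using the conjugation identity $\overline{\bar{\Z}(\tau_X G_{\bar{s}}, \bar{t}-n)} = \bar{\Z}(\tau_X H, t-n)$, I would rewrite
\begin{equation*}
\langle F_s, G_s\rangle_{s,t} = \frac{\pi^{n(t+4n)/2}}{\Gamma_n(t+4n)}\int_{\bar{\fn}} F_s(X)\,\bar{\Z}(\tau_X H, t-n)\,dX,
\end{equation*}
so that $F_s$, $H$, and every $\tau_X H$ lie in the single representation $I(s)$. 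The prefactor $\pi^{n(t+4n)/2}/\Gamma_n(t+4n)$ is entire in $t$, so all meromorphic behavior is carried by the double integral.

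For analyticity on the initial strip $\{n < \Re(t) < \Re(s)+1\}$, first note that for $GL(2n,\R)$ one has $d=1$, $e=0$, and $m/n = n$, so Lemma \ref{lem:z_conv} gives absolute convergence of the inner $\bar{\Z}(\tau_X H, t-n)$ throughout $n-1 < \Re(t) < \Re(s)+1$. To handle the outer integral, I would apply Proposition \ref{prop_polar_coord} to the $X$-integral and use the $K \cap L$-invariance of $\bar{\nabla}$ (together with the change of variable $X_1 \mapsto k \cdot X_1$ inside $\bar{\Z}$) to reduce the problem to a finite sum of expressions of the form
\begin{equation*}
\int_\Omega \frac{\varphi'(x)}{\prod_{j=1}^n (1+x_j^2)^{(s+n)/2}}\,\bar{\Z}\bigl(\tau_{\sum_j x_j F_j} H',\, t-n\bigr)\,\prod_{j=1}^n x_j^{c_j}\prod_{i<j}(x_i^2 - x_j^2)\,\prod_{j=1}^n dx_j,
\end{equation*}
with $H' \in I(s)$ and $\varphi'$ bounded. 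A second pass through polar coordinates on the inner $\bar{\Z}$ via Lemma \ref{lem_F_s_phi} reduces absolute convergence to the joint convergence of a finite product of one-variable integrals of the form $\int_0^\infty x^a(1+x^2)^{-b}\,dx$; tracking the exponents yields precisely the strip $n < \Re(t) < \Re(s)+1$. Analyticity on that strip then follows from Morera combined with Fubini and Lebesgue dominated convergence, exactly as in the proof of Proposition \ref{prop:mero_cont_noncompact}.

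For the meromorphic continuation to $\Omega_1$, I would apply the Bernstein/$b$-function trick of Lemma \ref{lem:mero_t} to the inner integral. Because $\bar{\nabla}(\partial)^{2k}$ is a constant-coefficient differential operator it commutes with translation $\tau_X$, and because it lies in $\mathcal{U}(\fg)$ it preserves $I(s)$, so $\bar{\nabla}(\partial)^{2k} H \in I(s)$. Integration by parts in $X_1$ then gives, for $(s,t)$ in the initial strip,
\begin{equation*}
\bar{\Z}(\tau_X H, t-n) = \frac{1}{b_k(t-n+2k)}\,\bar{\Z}\bigl(\tau_X\,\bar{\nabla}(\partial)^{2k} H,\;t-n+2k\bigr).
\end{equation*}
Substituting this into the defining formula and re-running the polar-coordinates bound of the previous paragraph with $H$ replaced by $\bar{\nabla}(\partial)^{2k} H$ and $t$ effectively replaced by $t+2k$ shows that the resulting expression is absolutely convergent and analytic on $\{n-2k < \Re(t) < \Re(s)+1-2k\}$. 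Dividing by the polynomial $b_k(t-n+2k)$ delivers the claimed meromorphic continuation to that strip, and taking the union over $k \in \Z_{\geq 0}$ gives $\Omega_1$.

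The step I expect to be the real obstacle is confirming absolute convergence of the outer integral on the initial strip $\{n < \Re(t) < \Re(s)+1\}$: the function $\bar{\Z}(\tau_X H, t-n)$ depends on $X$ in a nontrivial way, so one has to untangle the mixed dependence cleanly via polar coordinates and the $k$-change-of-variable above before the standard beta-integral bookkeeping applies. By contrast, once that convergence is in hand, the extension step is essentially cosmetic, since the replacement $H \mapsto \bar{\nabla}(\partial)^{2k} H$ keeps us in the same family $I(s)$ and shifts only the $\bar{\nabla}(X_1)$-exponent, so the poles of $\langle F_s, G_s\rangle_{s,t}$ on the continued domain come exactly from the zeros of $b_k(t-n+2k)$.
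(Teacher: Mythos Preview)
Your meromorphic continuation step is essentially the paper's: apply the $b$-function identity \eqref{eq:b_k} to the inner $\bar{\Z}$, observe that $\bar{\nabla}(\partial)^{2k}$ commutes with $\tau_X$ and preserves $I(s)$, and shift the strip by $2k$. That part is fine.

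The gap is in your convergence argument on the initial strip. Your ``second pass through polar coordinates on the inner $\bar{\Z}$ via Lemma \ref{lem_F_s_phi}'' does not produce a bound that is uniform in the outer variable. Concretely, $\tau_{\sum_j x_j F_j}H'$ does lie in $I(s)$, so Lemma \ref{lem_F_s_phi} applies pointwise; but the compact-picture function $\psi_x$ of $\tau_{\sum_j x_j F_j}H'$ depends on $x$, and its sup norm is \emph{not} bounded independently of $x$. Indeed, evaluating at the inner point $X_1=-\sum_j x_jF_j$ gives $|\tau_{\sum_j x_jF_j}H'(X_1)|=|H'(0)|$, while the Lemma \ref{lem_F_s_phi} factor at that point is $\prod_j(1+x_j^2)^{-(\Re s+n)/2}$, forcing $\|\psi_x\|_\infty\ge\prod_j(1+x_j^2)^{(\Re s+n)/2}$. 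So the bound you extract from the second pass is $\|\psi_x\|_\infty\cdot\prod_j(1+y_j^2)^{-(\Re s+n)/2}$, and the $x$-growth of the first factor exactly cancels the decay you need for the outer integral. The double integral does not reduce to a product of one-variable beta integrals this way.

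The paper avoids this by estimating the \emph{kernel} rather than the translated function: after the change of variable the double integral carries $|\det(X-Y)|^{\Re(t)-n}$, and with $X=k\cdot\sum_j x_jF_j$, $Y=k'\cdot\sum_j y_jF_j$ and the entries of $k,k'\in K\cap L$ bounded by $1$, one gets a pointwise bound of the form
\[
|\det(X-Y)|^{\Re(t)-n}\;\le\; c\sum_{\sigma\in S_n}\prod_{j=1}^n\bigl(x_j^{\Re(t)-n}+y_{\sigma(j)}^{\Re(t)-n}\bigr),
\]
which \emph{does} decouple the $x$- and $y$-dependence. Plugging this into the double polar-coordinate expression yields genuine products of one-variable integrals $\int_0^\infty x^a(1+x^2)^{-b}\,dx$, and the bookkeeping then gives exactly $n<\Re(t)<\Re(s)+1$. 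You should replace your second pass with this direct kernel estimate; the rest of your outline then goes through.
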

\begin{proof}
We assume that $\Re(t)>n$.
We write $X=k\cdot \sum_{j=1}^nx_jF_j$ with $x_j>0$ 
and $Y=k'\cdot \sum_{j=1}^ny_jF_j$ with $y_j>0$ for some $k, k'\in K\cap L$.
Since each entry in $k$ and $k'$ is a number between $-1$ and $1$, we have
\begin{align*}
|\det(X-Y)|^{\Re(t)-n}
\leq \left[\sum_{\sigma\in S_n}\prod_{j=1}^n\left(x_j+y_{\sigma(j)}\right)\right]^{\Re(t)-n}
\leq c\left[\sum_{\sigma\in S_n}\prod_{j=1}^n
\left(x_j^{\Re(t)-n}+y_{\sigma(j)}^{\Re(t)-n}\right)\right]\,.
\end{align*}
By Proposition \ref{prop_polar_coord} and (\ref{eq:z_polar_sign}), we have
\begin{equation*}
|\langle F_s,G_s\rangle_{s,t}|\leq c'\sum_{\sigma\in S_n}\int_{(\R^+)^n}\int_{(\R^+)^n}\prod_{j=1}^n
\frac{\left(x_j^{\Re(t)-n}+y_{\sigma(j)}^{\Re(t)-n}\right)(x_jy_j)^{2n-2j}}
{\left\{(1+x_j^2)(1+y_j^2)\right\}^{\frac{\Re(s)+n}{2}}}
\prod_{j=1}^ndx_j\prod_{j=1}^ndy_j
\end{equation*}
Each of these integrals is bounded by a product of one variable integrals of the form
\begin{equation*}
\int_0^{\infty}\frac{x^{2n-2j}}{(1+x^2)^{\frac{\textup{\Re}(s)+n}{2}}}dx\textup{ and }
\int_0^{\infty}\frac{x^{\Re(t)+n-2j}}{(1+x^2)^{\frac{\textup{\Re}(s)+n}{2}}}dx
\textup{ for all }j=1,\cdots,n.
\end{equation*}
They are finite if $\Re(t)<\Re(s)+1$.
The analytic part follows from the standard application of Morera's Theorem 
as in the proof of Proposition \ref{prop:mero_cont_noncompact}.

By (\ref{eq:b_k}), as convergent integrals
\begin{equation}\label{eq:b_k_shifted}
\bar{\Z}\left(\tau_XG_{\bar{s}}\,,\bar{t}-n\right)=\frac{1}{b_k\left(\bar{t}-n+2k\right)}
\bar{\Z}\left(\bar{\nabla}(\partial_X)^{2k}\tau_XG_{\bar{s}}\,,\bar{t}-n+2k\right)
\end{equation}
for $n<\Re(t)<\Re(s)+1-2k$.
This proves the meromorphic part. 
\end{proof}

We recall the facts that $F_s\in L^1$ for $F_s\in I(s)$ with $\Re(s)>n-1$ and
$\nabla^{-t}$ is a locally $L^1$-function for $\Re(t)<1$.

\begin{Lem}\label{lem:L^2norm_dmu}
For $F_s$, $G_s\in I(s)$, the integral 
$\langle \widehat{F_s},\widehat{G_s} \rangle_{L^2\left(\nabla^{-t}\right)}$
is complex analytic on $\Omega_2$ and we have 
\begin{equation*}
\langle \widehat{F_s},\widehat{G_s} \rangle_{L^2\left(\nabla^{-t}\right)}
=\int_{\bar{\fn}}F_s(X)\overline{\Z(\widehat{\tau_XG_{\bar{s}}},-\bar{t})}dX\textup{ on }\Omega_2\,.
\end{equation*}
\end{Lem}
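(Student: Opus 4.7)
The plan is to reduce the claimed identity to an application of Fubini's theorem after invoking the elementary modulation identity
\[
\widehat{\tau_X G_{\bar s}}(Y) = e^{2\pi i \langle X, Y\rangle}\,\widehat{G_{\bar s}}(Y),
\]
which follows from a change of variables in the definition of the Fourier transform. Taking complex conjugates inside the definition of $\Z(\widehat{\tau_X G_{\bar s}},-\bar t)$ and using that $\nabla(Y)$ is real-valued gives
\[
\overline{\Z(\widehat{\tau_X G_{\bar s}}, -\bar t)} = \int_{\fn} e^{-2\pi i \langle X, Y\rangle}\,\overline{\widehat{G_{\bar s}}(Y)}\,\nabla(Y)^{-t}\,dY.
\]
Substituting into the right-hand side of the claimed equality produces an iterated integral over $\bar\fn\times\fn$ whose inner $X$-integral recovers $\widehat{F_s}(Y)$, thereby giving $\langle \widehat{F_s},\widehat{G_s}\rangle_{L^2(\nabla^{-t})}$.

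The heart of the argument is justifying the Fubini interchange, i.e.\ showing that the double integral is absolutely convergent on $\Omega_2$. I would factor the bound as
\[
\|F_s\|_{L^1(\bar\fn)}\cdot \int_{\fn}|\widehat{G_{\bar s}}(Y)|\,\nabla(Y)^{-\Re(t)}\,dY.
\]
The first factor is finite for $\Re(s)>n-1$ by the corollary to Lemma \ref{lem:z_conv}. For the second, I would use the observation (already employed in Lemma \ref{lem:mero_t}) that every partial derivative $\partial^\alpha G_{\bar s}$ lies again in $I(\bar s)$, since $\partial^\alpha$ acts as an element of $\mathcal{U}(\fg)$; hence $\partial^\alpha G_{\bar s} \in L^1(\bar\fn)$ for every $\alpha$. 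This forces $\widehat{G_{\bar s}}$ to be rapidly decreasing, which absorbs any polynomial growth of $\nabla^{-\Re(t)}$ away from the vanishing locus, while the hypothesis $\Re(t)<1$ gives local integrability of $\nabla^{-t}$ near $\nabla=0$.

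For analyticity on $\Omega_2$, I would apply Morera's theorem to the same absolutely convergent double integral, following the analytic part of the proof of Proposition \ref{prop:mero_cont_noncompact}: the integrand is jointly analytic in $(s,t)$ and, over any simple closed curve in either variable, the contour integral can be exchanged with the double integral by Fubini and vanishes by Cauchy's theorem.

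The main obstacle I anticipate is the bookkeeping needed to make all of the above bounds locally uniform in $(s,t)$ on compact subsets of $\Omega_2$ — in particular, showing that the $L^1$-norms of the $\partial^\alpha G_{\bar s}$ (which drive the rapid-decay estimate on $\widehat{G_{\bar s}}$) depend continuously on $s$. Once this dominated estimate is in hand, the dominated convergence theorem together with Morera yields analyticity, and the Fubini-swap yields the equality, on the whole of $\Omega_2$.
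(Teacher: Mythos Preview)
Your proposal is correct and follows essentially the same approach as the paper: the paper also obtains rapid decay of $\widehat{G_{\bar s}}$ from the fact that $\partial^\alpha G_{\bar s}\in I(\bar s)\subset L^1$ (via Riemann--Lebesgue and the Fourier identity $\widehat{\partial^\alpha G_{\bar s}}(Y)=(2\pi i Y)^\alpha\widehat{G_{\bar s}}(Y)$), uses this together with local integrability of $\nabla^{-t}$ for $\Re(t)<1$ to get absolute convergence, invokes Morera for analyticity, and applies Fubini for the identity. The only cosmetic difference is that the paper unfolds the left-hand side to reach the right, whereas you fold the right-hand side back to the left.
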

\begin{proof}
For $F_s\in I(s)$ with $\Re(s)>n-1$, we show that there exist constants $c_k>0$ so that
\begin{equation}\label{eq:Fourier_F_s_bdd}
\left|\widehat{F_s}(Y)\right|\leq \frac{c_k}{\left(1+\|Y\|\right)^k}\textup{ for all }Y\in \fn.
\end{equation}
For any multi-index $\alpha$, we have
\begin{equation}\label{eq:FT_Diff}
\widehat{\partial^{\alpha}(F_s)}\in I(s)\textup{ and }
\widehat{\partial^{\alpha}(F_s)}(Y)=(2\pi i Y)^{\alpha}\widehat{F_s}(Y).
\end{equation}
Moreover, $\widehat{\partial^{\alpha}(F_s)}\in C_0$ by the Riemann-Lebesgue Lemma
and this proves (\ref{eq:Fourier_F_s_bdd}).
Therefore, 
the integral $\langle \widehat{F_s},\widehat{G_s} \rangle_{L^2\left(\nabla^{-t}\right)}$
converges absolutely for $\Re(t)<1$. 
The analytic part is from Morera's Theorem 
as in the proof of Proposition \ref{prop:mero_cont_noncompact}
together with the fact that the integrand of 
$\langle \widehat{F_s},\widehat{G_s} \rangle_{L^2\left(\nabla^{-t}\right)}$
is complex analytic in $s$ and $t$ on $\Omega_2$.

By Fubini's Theorem, we have the following string of equalities on $\Omega_2$ as convergent integrals.
\begin{align*}
\langle \widehat{F_s},\widehat{G_s} \rangle_{L^2\left(\nabla^{-t}\right)}
&=\int_{\bar{\fn}}\int_{\fn}F_s(X)e^{-2\pi i\langle X,Y\rangle}
\overline{\widehat{G_{\bar{s}}}(Y)\nabla(Y)^{-\bar{t}}}dYdX\\
&=\int_{\bar{\fn}}F_s(X)\overline{\Z\left(\widehat{\tau_XG_{\bar{s}}}\,,-\bar{t}\right)}dX\,.
\end{align*}
This proves the lemma.
\end{proof}

\noindent By (\ref{eq:FT_Diff}), for $F_s\in I(s)$ with $\Re(s)>n-1$, we also have
\begin{equation}\label{eq:Diff_Poly}
\left(\bar{\nabla}(\partial_X)^{2k}F_s\right)^{\widehat{}}(Y)
=\left(2\pi i \nabla(Y)\right)^{2k}\,\widehat{F_s}(Y).
\end{equation}

\noindent We can now finish the proof of Proposition \ref{prop:herm_mero}.

\begin{proof}[Proof of Proposition \ref{prop:herm_mero}]
We set $k=2n$ in the equation (\ref{eq:b_k_shifted}), by Lemma \ref{lem:form_st}, we get
\begin{equation}\label{eq:b_k_shifted_Z}
\langle F_s,G_s\rangle_{s,t}
=\frac{\pi^{\frac{n(t+4n)}{2}}}{\Gamma_n(t+4n)\cdot b_n(t+3n)}\int_{\bar{\fn}}F_s(X)
\overline{\bar{\Z}\left(\bar{\nabla}(\partial_X)^{4n}\tau_XG_{\bar{s}}\,,\bar{t}+3n\right)}dX
\end{equation}  
on the range $n<\Re(t)<\Re(s)+1-4n$ as convergent integrals.
On the other hand, the integral on the right hand side of (\ref{eq:b_k_shifted_Z}) 
converges absolutely on $\left\{(s,t)\in \C^2: -3n<\Re(t)<\Re(s)+1-4n\right\}$, 
which intersects with $\Omega_2$. 
By Theorem \ref{thm:extended_F_E} and  (\ref{eq:Diff_Poly}),
as meromorphic functions on the intersection
\begin{align*}
\frac{\pi^{\frac{n(t+4n)}{2}}}{\Gamma_n(t+4n)}&\int_{\bar{\fn}}F_s(X)
\overline{\bar{\Z}\left(\bar{\nabla}(\partial_X)^{4n}\tau_XG_{\bar{s}}\,,\bar{t}+3n\right)}dX\\
&=\frac{\pi^{\frac{n(-t-3n)}{2}}}{\Gamma_n(-t-3n)}\int_{\bar{\fn}}F_s(X)
\overline{\Z\left(\left(\nabla(\partial_X)^{4n}\tau_XG_{\bar{s}}\right)^{\widehat{}},
-\bar{t}-4n\right)}dX\\
&=\frac{\pi^{\frac{n(-t-3n)}{2}+4n}16^n}{\Gamma_n(-t-3n)}\int_{\bar{\fn}}F_s(X)
\overline{\Z\left(\left(\tau_XG_{\bar{s}}\right)^{\widehat{}},-\bar{t}\right)}dX\,.
\end{align*}
Then the proposition follows from Lemma \ref{lem:L^2norm_dmu}.
\end{proof}

\appendix
\section{Tables}\label{table}
The following two tables give information on the groups under consideration in this paper.

\begin{table}[h]
\begin{center}
\begin{tabular}{|l|c|c|c|c|c|}
\hline
\null&$G$&$n=\textup{rank}(\fn)$&$m=\dim(\fn)$&$d$&$e$\\ \hline
$1$.&$GL(2n,\R), n\geq 2$&$n$&$n^2$&$1$&$0$\\
$2$.&$O(2n,2n)$&$n$&$n(2n-1)$&$2$&$0$\\
$3$.&$E_7(7)$&$3$&$27$&$4$&$0$\\
$4$.&$O(p,q),\,p,q\geq 3$&$2$&$p+q-2$&$(p+q-4)/2$&$0$\\ \hline
$5$.&$Sp(n,\C)$&$n$&$n(n+1)$&$1$&$1$\\
$6$.&$SL(2n,\C)$&$n$&$2n^2$&$2$&$1$\\
$7$.&$SO(4n,\C)$&$n$&$2n(2n-1)$&$4$&$1$\\
$8$.&$E_{7,\C}$&$3$&$54$&$8$&$1$\\
$9$.&$SO(p,\C),\,p\geq 3$&$2$&$2(p-2)$&$p-4$&$1$\\ \hline
$10$.&$Sp(n,n)$&$n$&$n(2n+1)$&$2$&$2$\\
$11$.&$GL(2n,{\bf H})$&$n$&$4n^2$&$4$&$3$\\
$12$.&$SO(p,1)$&$1$&$p$&$0$&$p-1$\\
\hline
\end{tabular}
\end{center}
\vskip.2in
\caption{\null}
\end{table}

\begin{table}[h]
\begin{center}
\begin{tabular}{|l|c|c|c|}
\hline
\null&$V\simeq \fn$&$L$&$\nabla$\\ \hline
$1$.&$M(n\times n,\R)$&$GL(n,\R)\times GL(n,\R)$&$|\det|$\\
$2$.&$Skew(2n:\R)$&$GL(2n,\R)$&$\textup{Pfaffian}$\\
$3$.&$Herm(3,{\bf O}_{split})$&$E_6(6)\times \R^{\times}$&$\textup{deg. $3$ poly}$\\
$4$.&$\R^{p-1,q-1}$&$\R^{\times}O(p-1,q-1)$&$(X,X)$\\ \hline
$5$.&$Sym(n,\C)$&$GL(n:\C)$&$|\det|$\\
$6$.&$M(n\times n,\C)$&$S(GL(n,\C)\times GL(n,\C))$&$|\det|$\\
$7$.&$Skew(2n,\C)$&$GL(2n,\C)$&$|\textup{Pfaffian}|$\\
$8$.&$Herm(3,{\bf O})_{\C}$&$E_{6,\C} \C^{\times}$&$|\textup{deg. $3$ poly}|$\\
$9$.&$\C^{p-1}$&$SO(p-2:\C)\times \C^{\times}$&$|(Z,Z)|$\\ \hline
$10$.&$Sym(2n,\C)\cap M(n\times n,{\bf H})$&$GL(n,{\bf H})$&$|\det_{\C}(Z)|^{\frac{1}{2}}$\\
$11$.&$M(n\times n,{\bf H})$&$GL(n,{\bf H})\times GL(n,{\bf H})$&$|\det_{\C}(Z)|^{\frac{1}{2}}$\\
$12$.&$\R^{p-1}$&$SO(p-1)\times \R^{\times}$&$\|\cdot\|$\\
\hline
\end{tabular}
\end{center}
\vskip.2in
\caption{Jordan algebras for the groups in Table $1$.}
\end{table}

\end{document}